\documentclass[11pt]{amsart}
\usepackage[T1]{fontenc}
\usepackage{lmodern}
\usepackage{amsmath,amssymb,amsthm,mathtools}
\usepackage[margin=30mm]{geometry}
\usepackage{microtype,booktabs}
\usepackage[colorlinks=true,linkcolor=blue,citecolor=blue,urlcolor=blue]{hyperref}
\newtheorem{theorem}{Theorem}[section]
\newtheorem{proposition}[theorem]{Proposition}
\newtheorem{lemma}[theorem]{Lemma}
\newtheorem{corollary}[theorem]{Corollary}
\theoremstyle{definition}

\theoremstyle{remark}
\newtheorem{remark}[theorem]{Remark}
\newtheorem{example}[theorem]{Example}
\numberwithin{equation}{section}
\newcommand{\C}{\mathbb C}
\newcommand{\Z}{\mathbb Z}
\newcommand{\Gm}{\mathbb G_m}
\newcommand{\cmin}{\mathbf c}

\newcommand{\Wf}{W_{0}}
\newcommand{\Wa}{W_{\mathrm a}}
\newcommand{\Fe}{F_e}
\newcommand{\Pic}{\operatorname{Pic}}
\newcommand{\Extc}{\operatorname{Ext}^{1}_{\mathrm{cent}}}
\newcommand{\HH}{\mathcal H}
\newcommand{\A}{\mathcal A}
\newcommand{\Be}{\mathcal B_e}
\newcommand{\g}{\mathfrak g}
\newcommand{\bfr}{\mathfrak b}
\newcommand{\OO}{\mathcal O}

\newcommand{\Mat}{\operatorname{Mat}}
\newcommand{\Irr}{\operatorname{Irr}}
\newcommand{\Ad}{\operatorname{Ad}}

\newcommand{\rank}{\operatorname{rank}}
\newcommand{\SL}{\operatorname{SL}}
\newcommand{\GL}{\operatorname{GL}}
\newcommand{\Sp}{\operatorname{Sp}}

\newcommand{\der}{\mathrm{der}}

\newcommand{\id}{\operatorname{id}}
\newcommand{\amin}{\mathbf a}
\title[The second lowest two-sided cell]{Number of left cells in the second lowest two-sided cell}
\author{Xun Xie}
\author{Qifan Zhou}
\date{October 23, 2026}
\subjclass[2020]{20C08, 20G05, 17B08}
\keywords{Affine Weyl group, Kazhdan--Lusztig cell, asymptotic Hecke algebra, minimal nilpotent orbit, central extension, Picard group, Springer fiber}
\begin{document}
\begin{abstract}
We prove Shi's conjecture that the number of left cells in the second lowest two-sided cell of the affine Weyl group attached to a simply connected simple complex group $G$ is exactly $|W_0|/2$, where $W_0$ is the corresponding finite Weyl group. The same formula holds for the corresponding extended affine Weyl group. The key step is to show that the reductive centralizer in $G$ of a minimal nilpotent element has identity component with simply connected derived subgroup and admits no nontrivial algebraic central extensions by $\mathbb G_m$. This gives an isomorphism of based rings between the asymptotic ring of the extended cell and a matrix ring over the representation ring of the centralizer, with matrix size equal to the Euler characteristic of the Springer fiber, namely $|W_0|/2$.
\end{abstract}
\maketitle
\section{Introduction}\label{sec:introduction}

Let $G$ be a simply connected simple complex algebraic group, with Lie algebra $\g$, a fixed maximal torus $T\subset G$, and root system $\Phi$. Let $\Wf=N_G(T)/T$ be its finite Weyl group. Put $P=X^*(T)$ for the character lattice and $Q=\Z\Phi$ for the root lattice. Then $\Wa=\Wf\ltimes Q$ is the affine Weyl group and $W=\Wf\ltimes P$ is the extended affine Weyl group attached to $G$. Since we use the character lattice of $G$, the affine Coxeter graph is of the dual type. For example, when $G=\Sp_{2n}$ with $n\geq3$, $\Wa$ has Coxeter graph of type $\widetilde B_n$.

Cells are defined for any Coxeter group using the Kazhdan--Lusztig basis \cite{KL79}; the definition extends to $W$ by adjoining the length-zero elements. Lusztig established a one-to-one correspondence between two-sided cells of $W$ and nilpotent orbits in $\g$ \cite[Theorem~4.8]{Lus89}. Bezrukavnikov proved that this correspondence is compatible with the two-sided cell order and the closure order on nilpotent orbits \cite[Theorem~4(b)]{Bez09}. The cell $\mathbf c_0$ corresponding to the zero orbit is the lowest two-sided cell, studied by Shi in \cite{Shi87}. Since $\g$ is simple, it has a unique minimal nonzero nilpotent orbit \cite[Theorem~4.3.3]{CM93}. The corresponding two-sided cell is the second lowest two-sided cell.

This paper concerns the number of left cells in the second lowest two-sided cell. Shi conjectured that this number is $|\Wf|/2$ for $\Wa$ \cite[Conjecture~0.2]{Shi11}. He proved the upper bound $|\Wf|/2$ in \cite[Theorem~5.10]{Shi11}, and subsequently established the conjecture in type $\widetilde B_n$ \cite[Theorem~4.4]{Shi23}. The second lowest two-sided cells of $W$ and $\Wa$ contain the same number of left cells. We therefore work mainly with $W$. We prove Shi's conjecture uniformly.

\begin{theorem}\label{thm:count}
The second lowest two-sided cell of the extended affine Weyl group $W$ contains exactly $|\Wf|/2$ left cells. 
\end{theorem}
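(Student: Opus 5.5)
Our approach is to use Lusztig's conjectural description of the asymptotic ring, proved by Bezrukavnikov--Ostrik: for the two-sided cell $\cmin$ of $W$ attached to a nilpotent $e$, the asymptotic ring $J_{\cmin}$ is isomorphic, as a based ring, to $K^{F_e}(Y\times Y)$. Here $F_e$ is the reductive centralizer of $e$, and $Y$ is a finite $F_e$-set, possibly with a central extension twist. Left cells in $\cmin$ are in bijection with the distinguished involutions, i.e. with the diagonal unit idempotents of $J_{\cmin}$. So it suffices to prove that, for $e$ minimal, $J_{\cmin}\cong \Mat_{n}(R(F_e))$ as a based ring with $n=\chi(\Be)$ the Euler characteristic of the Springer fiber. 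Then we show $\chi(\Be)=|\Wf|/2$. In a matrix ring $\Mat_n(R(F_e))$ with the standard basis $E_{ij}\otimes V$, $V\in\Irr F_e$, the distinguished involutions are exactly $E_{ii}\otimes\mathbf 1$. If $F_e$ is connected, there are exactly $n$ of them, and the count follows.

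Step one is to analyse $F_e$ for $e$ minimal. Take $e$ a highest root vector with $\mathfrak{sl}_2$-triple $(e,h,f)$. Then $F_e=Z_G(e,h,f)$, and its Lie algebra is the centralizer of the $\mathfrak{sl}_2$ attached to the highest root $\theta$. This is the semisimple-plus-centre subalgebra generated by $\mathfrak t\cap\ker\theta$ and the roots orthogonal to $\theta$. Since $G$ is simply connected, $Z_G(\SL_2^{\theta})$ can be computed as the centralizer in $G$ of the image of $\SL_2^\theta$. It is known (Brylinski--Kostant, or a direct check from the extended Dynkin diagram) that it is connected. Its derived subgroup is generated by root subgroups for roots orthogonal to $\theta$. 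Simple connectivity of the derived group follows because the coroots of $\theta^\perp\cap\Phi$ span a saturated sublattice of $X_*(T)$, which is checked via the extended Dynkin diagram, where these roots form the subdiagram of nodes not adjacent to $-\theta$. Step two is to show $\Extc(F_e,\Gm)=0$. Central extensions of a connected reductive group with simply connected derived group by $\Gm$ are classified by $\Pic$, equivalently by characters of $\pi_1$ of the derived group. These vanish, so all such extensions split. Hence no twisting occurs in Bezrukavnikov--Ostrik, and $Y$ can be taken with trivial cocycle.

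Step three is to identify $Y$. The $F_e$-action on $Y$ is trivial since $F_e$ is connected. Using the description of $Y$ via irreducible components/fixed points of $\Be$, its cardinality equals $\dim H^*(\Be)=\chi(\Be)$, as $\Be$ has no odd cohomology. Thus $J_{\cmin}\cong K^{F_e}(Y\times Y)=\Mat_{|Y|}(R(F_e))$ as based rings. Step four computes $\chi(\Be)$. Springer theory gives $H^*(\Be)$ as a $\Wf$-module. For the minimal orbit $\dim\Be=\dim\mathcal B-\tfrac12\dim\OO_{\min}$, and $\chi(\Be)$ equals the number of $T'$-fixed points for a torus $T'\subset F_e$ of rank $\rank G-1$. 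These are the flags $w\bfr$ with $e\in\Ad(w)\bfr$, i.e. $w^{-1}\theta>0$. Exactly half of $\Wf$ satisfies this, since $w\mapsto w s_\theta$ swaps the sign of $w^{-1}\theta$. Hence $\chi(\Be)=|\Wf|/2$. A cross-check: the fixed-point set of a torus in $\Be$ has the right size by localization, because $\Be^{T'}$ is finite when $T'$ is regular in $\ker\theta$.

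The main obstacle we anticipate is step one/two: proving connectedness of $F_e$ and simple connectivity of its derived group uniformly, rather than by type-by-type inspection. This is exactly what rules out a nontrivial twist and extra components, which would change the number of distinguished involutions. We would first try the description $F_e=Z_G(\SL_2^\theta)$ together with the fact that centralizers of tori in simply connected groups are connected. Write $F_e$ as the fixed points of the involution-type element $\theta^\vee(-1)$ intersected appropriately, falling back on the extended Dynkin diagram case list if needed.
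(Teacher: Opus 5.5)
Your overall route is the paper's: Bezrukavnikov--Ostrik, splitting of the twists, trivial action on $Y_e$, $|Y_e|=\chi(\Be)=|\Wf|/2$, and counting the summands $t_d$ of $1_{\cmin}$. But the step you flag as the main obstacle is false as stated: $\Fe$ is \emph{not} connected in the symplectic family. For $G=\Sp_{2r}$, $r\geq1$, one has $\Fe\simeq\Sp_{2r-2}\times\mu_2$, and the non-identity component contains $-1\in Z(G)$. For $\SL_2=\Sp_2$ simply $\Fe=Z(\SL_2)=\mu_2$. Equivalently, $\Fe$ is the kernel of a character of the Levi $Z_G(\theta^\vee(\Gm))$ that restricts to $\theta$ on $T$, and in type $C$ the weight $\theta=2\omega_1$ is not primitive in $P$. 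So no appeal to Brylinski--Kostant, torus centralizers or $\theta^\vee(-1)$ can give connectedness, and two of your steps break.

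First, describing $\Extc$ via $\Pic$, or via characters of $\pi_1$ of the derived group, is a statement about \emph{connected} groups. For disconnected groups $\Pic=0$ does not force splitting: the preimage in $\GL_2$ of a Klein four subgroup of $\operatorname{PGL}_2$ is a counterexample. So $\Sp_{2r-2}\times\mu_2$ needs its own argument. Take the unique splitting $s$ over the simply connected factor and lift a generator of $\mu_2$ to $u$. Conjugation by $u$ gives another splitting, so by uniqueness it fixes $s$. Then rescale $u$ by a scalar so that $u^2=1$.

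Second, and decisive for the count, you cannot say ``the action on $Y$ is trivial since $\Fe$ is connected''. If $-1$ moved points of $Y$, the identity of $K_{\Fe}(Y\times Y)$ would split into one summand per $\Fe$-orbit on the diagonal. That is $|Y/\Fe|<|Y|$ summands, and you would count too few left cells. The missing ingredient is $\Fe=\Fe^\circ Z(G)$ together with the separate fact that $Z(G)$ acts trivially on $Y_e$. The paper takes this from the realization $Y_e=\Irr(A_e)$ via the noncommutative Springer resolution \cite{BDD23}.

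Two smaller corrections. The involution exchanging $S_\theta=\{w:w^{-1}\theta>0\}$ with its complement is $w\mapsto s_\theta w$, since $(s_\theta w)^{-1}\theta=-w^{-1}\theta$. Your $w\mapsto ws_\theta$ sends $w^{-1}\theta$ to $s_\theta(w^{-1}\theta)$, which equals $w^{-1}\theta$ whenever $w^{-1}\theta\perp\theta$ (e.g.\ $w^{-1}\theta=\alpha_2$ in type $A_3$). Also, justify $|Y_e|=\chi(\Be)$ by $|\Irr(A_e)|=\rank K_0(A_e\text{-mod})=\rank K_0(\Be)$ together with the paving of $\Be$ by the cells $BwB/B$, $w\in S_\theta$. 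Irreducible components of $\Be$ do not give it: for $\SL_3$ the minimal Springer fiber has two components but $\chi=3$.
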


Our proof uses the asymptotic ring introduced by Lusztig in \cite{Lus87}, also called the asymptotic Hecke algebra, based ring or $J$-ring. For the lowest two-sided cell $\mathbf c_0$ of $W$, Xi proved an isomorphism of based rings \cite[Theorem~1.10]{Xi90}
\[
                  J_{\mathbf c_0}\simeq\Mat_{|\Wf|}(R(G)),
\]
where $R(G)$ is the representation ring of $G$. The matrix size is exactly the number of left cells in $\mathbf c_0$, namely $|\Wf|$ \cite{Shi87}.

For a nilpotent element $e\in\g$, choose an $\mathfrak{sl}_2$-triple $(e,h,f)$ and put
\(
                         \Fe=Z_G(e,h,f).
\)
This is a maximal reductive subgroup of $Z_G(e)$, which we call the reductive centralizer of $e$. Write $R(\Fe)$ for the representation ring of finite-dimensional rational representations of $\Fe$, and let
\(
 \Be=\{\bfr\subset\g:\bfr\text{ is a Borel subalgebra and }e\in\bfr\}
\)
be the Springer fiber.

Let $\mathbf c_e$ be the two-sided cell corresponding to $e$. Lusztig conjectured that its asymptotic ring $J_{\mathbf c_e}$ is isomorphic, as a based ring, to $K_{\Fe}(Y_e\times Y_e)$ for some finite $\Fe$-set $Y_e$ \cite[Conjecture~10.5]{Lus89}. Here $K_{\Fe}(Y_e\times Y_e)$ is the Grothendieck group of $\Fe$-equivariant vector bundles on $Y_e\times Y_e$, equipped with the convolution product.

Bezrukavnikov and Ostrik proved a weaker form of this conjecture by allowing central extensions of the point stabilizers \cite{BO04}. Later, Bezrukavnikov, Dawydiak, and Dobrovolska \cite{BDD23}, and Qiu and Xi \cite{QX23}, showed that these central extensions cannot be omitted in general. Their example is the two-sided cell of type $\widetilde B_3$ corresponding to the nilpotent orbit with Jordan type $(2,2,2)$ in $\Sp_6$. A second aim of this paper is to determine whether such extensions are needed for the second lowest two-sided cell.

We answer this question through the structure of the reductive centralizer of a minimal nilpotent element. For an algebraic group $K$, let $\Extc(K,\Gm)$ denote the group of equivalence classes of algebraic central extensions of $K$ by $\Gm$. We prove the following structural result.

\begin{theorem}\label{thm:centralizer-structure}
Let $e$ belong to the minimal nonzero nilpotent orbit of $G$. Then
\[
       \Fe=\Fe^\circ Z(G),\qquad
       [\Fe^\circ,\Fe^\circ]\text{ is simply connected},
\]
and
\[
                    \Extc(\Fe,\Gm)=0.
\]
\end{theorem}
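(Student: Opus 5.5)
The plan is to make $\Fe$ completely explicit for the highest root and then read off all three assertions from that description. Take $e=e_\theta$, a root vector for the highest root $\theta$, with $h=\theta^\vee$ and $f=e_{-\theta}$. Let $L=Z_G(\theta^\vee)$, the centralizer of the one-parameter subgroup $\theta^\vee(\Gm)$. Then $L$ is connected and is the standard Levi subgroup whose roots are $\Phi_0=\{\alpha:\langle\alpha,\theta^\vee\rangle=0\}$. It is standard because $\theta^\vee$ is dominant, so $\Phi_0$ is spanned by the simple roots orthogonal to $\theta^\vee$.

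\emph{Step 1: description of $\Fe$.} The group $Z_G(e,h,f)$ lies in $L$. Inside $L$, the line $\C e$ is $\g_\theta$, on which $L$ acts through the character $\theta$. Centralizing $f$ is then automatic, since $f$ is determined by $(e,h)$. So I expect
\[\Fe=\ker(\theta\colon L\to\Gm)=L_{\der}\cdot T_\theta,\qquad T_\theta:=\ker(\theta|_T),\]
where $L_{\der}=[L,L]$ is generated by the root subgroups for $\Phi_0$.

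\emph{Step 2: components and derived group.} This gives $\Fe^\circ=L_{\der}\cdot T_\theta^\circ$ and $\pi_0(\Fe)=\pi_0(T_\theta)$. The latter is cyclic of order $d$, where $\theta=d\lambda$ with $\lambda$ primitive in $P$. Since $\theta\in Q$, we have $Z(G)\subset T_\theta$. The claim $\Fe=\Fe^\circ Z(G)$ then amounts to surjectivity of $Z(G)=\operatorname{Hom}(P/Q,\C^\times)\to\pi_0(T_\theta)$. Dually, this says the class of $\lambda$ in $P/Q$ has order exactly $d$. I would check this from the table of highest roots:
\begin{itemize}
\item $d=1$ in every type except $C_n$ ($\theta$ is a fundamental weight, or $\omega_1+\omega_n$ in type $A$);
\item in type $C_n$, $\theta=2\omega_1$ and $\omega_1\notin Q$.
\end{itemize}
Next, $[\Fe^\circ,\Fe^\circ]=L_{\der}$. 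This group is simply connected because its coroot lattice is spanned by a subset of the simple coroots, which is saturated in $X_*(T)=Q^\vee$.

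\emph{Step 3: vanishing of $\Extc(\Fe,\Gm)$.} This is the step I expect to be the main obstacle, because $\Fe$ is disconnected and $\Fe^\circ\cap Z(G)$ is nontrivial. Let $1\to\Gm\to E\to\Fe\to1$ be a central extension. It splits iff some character $\chi$ of $E$ restricts to the identity on $\Gm$.

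First handle the connected part. $E^\circ$ is a connected reductive central extension of $\Fe^\circ$. Its derived group is isogenous to $L_{\der}$, which is simply connected, so the isogeny splits. A central extension of a torus by $\Gm$ also splits. Together these produce a character $\chi_0$ of $E^\circ$ with $\chi_0|_{\Gm}=\id$.

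Next, let $D\subset E$ be the preimage of $T_\theta$. The commutator pairing on $T_\theta$ is bimultiplicative and trivial on $T_\theta^\circ$. It is therefore induced by an alternating pairing on the cyclic group $\pi_0(T_\theta)$, hence trivial. So $D$ is commutative, in fact diagonalizable. By Steps 1–2, $E=E^\circ D$. Since characters of a diagonalizable group extend from closed subgroups, I can extend $\chi_0|_{D\cap E^\circ}$ to a character $\chi_D$ of $D$.

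Finally, define $\chi(xd)=\chi_0(x)\chi_D(d)$. This is well defined and multiplicative provided $\chi_0$ is invariant under conjugation by $D$. That holds because each $d\in D$ maps into $T_\theta^\circ Z(G)$. Modulo central elements, $d$ therefore acts on $E^\circ$ as conjugation by an element of the connected group $E^\circ$, and characters are invariant under inner automorphisms. The resulting $\chi$ splits the extension, giving $\Extc(\Fe,\Gm)=0$.
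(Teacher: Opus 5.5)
Your route differs from the paper's and can be made to work, but the justification of the last step of Step~3 is wrong as written. Write $d=\tilde t\,\tilde z$ with $\tilde t\in E^\circ$ and $p(\tilde z)=z\in Z(G)$. Centrality of $z$ in $\Fe$ only gives $\tilde z x\tilde z^{-1}=\kappa(x)\,x$ for $x\in E^\circ$, where $\kappa\colon E^\circ\to\Gm$ is a character trivial on $\Gm$. Since $\chi_0|_{\Gm}=\id$, this yields $\chi_0(\tilde z x\tilde z^{-1})=\kappa(x)\chi_0(x)$. So working ``modulo central elements'' discards exactly the information you need: invariance of $\chi_0$ is equivalent to $\kappa=1$. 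Lifts of central elements need not centralize $E^\circ$; this is the phenomenon of Example~\ref{ex:disconnected-extension}, where every element of $Q$ is central but the lifts anticommute.

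The repair uses only what you already proved. Since $\Fe^\circ=L_{\der}T_\theta^\circ$ and $E^{\circ,\der}\to L_{\der}$ is onto, you have $E^\circ=E^{\circ,\der}\,p^{-1}(T_\theta^\circ)$ with $p^{-1}(T_\theta^\circ)\subset D$. Writing $x=yu$ accordingly, you get $\chi_0(dxd^{-1})=\chi_0(dyd^{-1})\,\chi_0(u)=\chi_0(x)$. This holds because $D$ is commutative and $\chi_0$ kills $E^{\circ,\der}$, which conjugation by $d$ preserves. With this fix, Step~3 no longer uses $\Fe=\Fe^\circ Z(G)$. Alternatively, in the cases at hand either $\Fe$ is connected, or (type $C$) $T_\theta^\circ\subset L_{\der}$. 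In the latter case $\Fe^\circ=L_{\der}$ has no nontrivial characters, so $\kappa=1$ automatically.

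Granting the repair, compare the two routes. The paper gets $\Fe=\Fe^\circ Z(G)$ from primitivity of $\theta$ outside type $C$, so that $\Fe$ is connected. In type $C$ it gives an explicit identification $\Fe\simeq\Sp_{2r-2}\times\mu_2$, with $-\id$ in the other component. Your criterion, that $\lambda$ have order exactly $d$ in $P/Q$, handles both uniformly.

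For $\Extc(\Fe,\Gm)=0$, the paper glues splittings over $Z(K)^\circ$ and $K^{\der}$, after correcting by a character extended from their finite intersection. It treats type $C$ separately by Lemma~\ref{lem:cyclic-splitting}. Your passage to the torus $E^\circ/E^{\circ,\der}$ avoids that gluing; $\Gm$ embeds in this torus because $E^{\circ,\der}\to L_{\der}$ is an isomorphism. Your commutativity argument for $D$, resting on cyclicity of $\pi_0(T_\theta)$, replaces the case split. The result works for the kernel of any character of a connected reductive group with simply connected derived group. The paper's route, in exchange, yields the explicit structure of $\Fe$ in type $C$.

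Two small points. $A_1$ must be grouped with $C_1$ in your list, since $\omega_1+\omega_1$ is not primitive. Also, the claim that $L$ preserves $\C e$ needs the short Cauchy--Schwarz argument that $\g_\theta$ is the whole $2$-weight space of $\theta^\vee$.
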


We use the  criterion that every algebraic central extension of a connected reductive group $K$ by $\Gm$ splits if its derived group $K^{\der}$ is simply connected; see \cite{Ive76,Ros21,Pop23}. Section~\ref{sec:extensions} gives a direct proof of this criterion and treats the disconnected case needed here.

As explained in Section~\ref{sec:geometry}, the finite set $Y_e$ may be chosen so that $Z(G)$ acts trivially. Since $\Fe^\circ$ also acts trivially on a finite set, Theorem~\ref{thm:centralizer-structure} implies that, for $e$ in the minimal orbit, every point of $Y_e$ is fixed by $\Fe$. Each stabilizer is therefore $\Fe$, and the associated central extensions split. The weaker form of Lusztig's conjecture proved by Bezrukavnikov and Ostrik then identifies the asymptotic ring with a matrix ring over $R(\Fe)$. Computing its size from the Springer fiber yields the following result.

\begin{theorem}\label{thm:matrix}
Let $e$ belong to the minimal nonzero nilpotent orbit of $G$, and let $\cmin$ be the second lowest two-sided cell of $W$. Then there is an isomorphism of based rings
\[
             J_{\cmin}\simeq \Mat_N(R(\Fe)),
             \qquad N=\chi(\Be)=\frac{|\Wf|}{2}.
\]
The basis $\{t_w:w\in\cmin\}$ on the left corresponds to the basis $\{E_{ij}\otimes[V]\}$ on the right, where $E_{ij}$, $1\leq i,j\leq N$, are the matrix units and $V$ runs over the irreducible rational representations of $\Fe$.
\end{theorem}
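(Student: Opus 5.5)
We will deduce Theorem~\ref{thm:matrix} from Theorem~\ref{thm:centralizer-structure} combined with the Bezrukavnikov--Ostrik theorem \cite{BO04}. In the form we use, \cite{BO04} gives a finite $\Fe$-set $Y_e$ and, for each $\Fe$-orbit $O\subset Y_e$ with base point $y$, a central extension $1\to\Gm\to\widetilde{\mathrm{Stab}}_y\to\mathrm{Stab}_y\to1$. It also gives an isomorphism of based rings between $J_{\mathbf c_e}$ and the twisted equivariant $K$-group $K_{\Fe}(Y_e\times Y_e)$, built from these extensions; the canonical basis there consists of classes of irreducible twisted equivariant bundles supported on single orbits.

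The first step is to arrange that $Z(G)$ acts trivially on $Y_e$. As indicated in Section~\ref{sec:geometry}, $Y_e$ arises from the component data of the Springer fiber, and $Z(G)$ acts trivially on $\Be$. Next we use Theorem~\ref{thm:centralizer-structure}. Since $\Fe^\circ$ is connected, it acts trivially on the finite set $Y_e$, and $\Fe=\Fe^\circ Z(G)$. Hence $\Fe$ fixes every point of $Y_e$, and every stabilizer equals $\Fe$.

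Every $\Fe$-orbit on $Y_e\times Y_e$ is then a single point $(y,y')$ with stabilizer $\Fe$. The relevant central extension there is built from the extensions at $y$ and $y'$, one of them twisted by the inverse cocycle. Since $\Extc(\Fe,\Gm)=0$, all of these extensions split. After fixing a splitting, the twisted category at $(y,y')$ becomes ordinary $\Fe$-representations, with the twist by $\Gm$ of weight one matched to the untwisted structure. We must check that the splittings can be chosen compatibly with convolution. This follows by choosing one splitting per point $y$ and using the products of these splittings on pairs, since convolution composes the twists multiplicatively.

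With these choices, $K_{\Fe}(Y_e\times Y_e)\cong\bigoplus_{y,y'}R(\Fe)$. Convolution becomes matrix multiplication, namely $(E_{yy'}\otimes[V])(E_{y'y''}\otimes[V'])=E_{yy''}\otimes[V\otimes V']$, and pairs with $y'$ differing from the middle index multiply to zero. This gives $\Mat_N(R(\Fe))$ with $N=|Y_e|$. The irreducible bundles correspond to $E_{ij}\otimes[V]$ with $V$ irreducible, so the basis $\{t_w\}$ matches as claimed.

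It remains to compute $N$. We would show $|Y_e|=\chi(\Be)$. Counting diagonal basis elements, the unit of $J_{\cmin}$ is a sum over distinguished involutions, one per left cell. These correspond to $E_{ii}\otimes[\mathbf 1]$, so $N$ is the number of left cells. For the computation we use that $\Fe$ acts trivially on $H^*(\Be)$, since $A(e)$ is trivial in simply connected $G$ because $\Fe/\Fe^\circ$ is central. In the \cite{BO04} description, $Y_e$ then indexes a basis of $H^*(\Be)$, so $|Y_e|=\dim H^*(\Be)=\chi(\Be)$, the odd cohomology vanishing. Finally we need $\chi(\Be)=|\Wf|/2$. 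By Springer theory, $H^*(\Be)$ carries the $\Wf$-module $\operatorname{Ind}_{W'}^{\Wf}$-type structure, or we can compute directly. For minimal $e$, the Springer fiber is paved by affines, and its number of cells equals the number of $w\in\Wf$ with $w^{-1}\theta<0$, where $\theta$ is the highest root, because $e$ is a highest root vector. Exactly half of $\Wf$ satisfies this, via $w\mapsto s_\theta w$ or a sign-pairing argument, which gives $|\Wf|/2$.

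The main obstacle is the compatibility of the splittings with convolution, together with the identification of $|Y_e|$ with $\chi(\Be)$ in the precise form stated in \cite{BO04}. For the first we would work with one splitting per point as above; for the second we would rely on the identification of $Y_e$ through the Springer fiber from Section~\ref{sec:geometry}.
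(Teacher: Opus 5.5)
Your overall route is the paper's. Theorem~\ref{thm:centralizer-structure} together with center-triviality makes $\Fe$ fix every point of $Y_e$, all stabilizer extensions split, and the Bezrukavnikov--Ostrik ring becomes $\Mat_{|Y_e|}(R(\Fe))$ with the stated basis. The compatibility of splittings you worry about is harmless here. Since every stabilizer is all of $\Fe$, a splitting $s_y$ of $\widetilde F_y\to\Fe$ is automatically compatible with conjugation, because any lift of $g$ differs from $s_y(g)$ by a central scalar. One splitting per point then induces the splittings on pairs.

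What you do not actually establish is $N=|Y_e|=\chi(\Be)$, and the mechanism you propose for it is misdirected. The action of $A(e)$ on $H^*(\Be)$ plays no role in computing $|Y_e|$. The input the paper uses is a concrete model: $Y_e=\Irr(A_e)$, where $A_e=A/\mathfrak m_eA$ is the fiber at $e$ of the noncommutative Springer resolution \cite{BDD23}, together with $K_0(A_e\text{-}\mathrm{mod})\simeq K_0(\Be)$ \cite{BM13}. Since the simple modules of a finite-dimensional algebra form a $\Z$-basis of its Grothendieck group, this gives $|Y_e|=\rank_\Z K_0(\Be)$. So $Y_e$ indexes a basis of $K_0(\Be)$, and what must be computed is the rank of coherent $K_0$, not $\dim H^*(\Be)$. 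The Schubert paving shows that the classes $[\OO_{X(w)}]$, $w\in S_\theta$, generate $K_0(\Be)$. Their pushforwards to $K_0(G/B)$ are distinct members of the Schubert basis \cite{Bri05}, hence they are independent. Your cohomological count becomes valid only after this, or after separately proving $\rank K_0(\Be)=\dim H^*(\Be)$ for the paved variety $\Be$.

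Smaller corrections:
\begin{itemize}
\item $A(e)$ is not trivial for $G=\Sp_{2r}$: by Proposition~\ref{prop:centralizer}, $\Fe/\Fe^\circ\simeq\mu_2$. What is true, and all you need, is that this group is the image of $Z(G)$, which acts trivially on $\Be$.
\item $Z(G)$ acting trivially on $\Be$ does not by itself make it act trivially on $Y_e$, which is a set of isomorphism classes of simple $A_e$-modules. The needed fact is that central elements fix these classes \cite{BDD23}.
\item $Y_e$ is not ``component data'' of $\Be$: for $\SL_3$ the minimal Springer fiber has two irreducible components, while $|Y_e|=3$.
\item $wB\in\Be$ if and only if $w^{-1}\theta\in\Phi^+$ (consider $w=1$), not $w^{-1}\theta<0$. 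This does not affect the count.
\end{itemize}
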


The based rings in affine type $\widetilde A$ were studied by Xi, who proved Lusztig's conjecture in that setting \cite{Xi02}. For the second lowest cell in type $\widetilde B_3$, Qiu constructed an explicit based-ring isomorphism \cite[Theorem~3.4]{Qiu25}.

More recently, Parkinson determined the asymptotic ring of the second lowest cell in type $\widetilde B_n$ for all $n\geq2$ \cite[Theorem~1.6]{Par26}. His work gives an explicit cell factorisation and determines the image of every basis element $t_w$ as a matrix unit tensored with an irreducible representation of $\Sp_{2n-2}\times\mu_2$. Theorem~\ref{thm:matrix} establishes the existence of a based-ring isomorphism uniformly in all types by a geometric argument. It does not provide an explicit rule assigning to each $w\in\cmin$ the corresponding indices $i,j$ and irreducible representation $V$.

For the minimal orbit, the Springer fiber is a union of Schubert cells indexed by precisely half the elements of $\Wf$ \cite[Sections~3.1--3.2]{AC16}. Together with the construction of $Y_e$, this gives $|Y_e|=\chi(\Be)=|\Wf|/2$, as explained in Sections~\ref{sec:geometry} and~\ref{sec:springer}.

The based-ring isomorphism also gives the number of left cells. The identity of $J_{\cmin}$ is a sum of basis elements, one for each left cell, whereas the identity of $\Mat_N(R(\Fe))$ has exactly $N$ such summands. Thus Theorem~\ref{thm:matrix} implies the assertion of Theorem~\ref{thm:count} for $W$. The correspondence recalled in Section~\ref{sec:cells} then gives the assertion for $\Wa$. These arguments are completed in Section~\ref{sec:proof}.

\section{Cells and based rings}\label{sec:cells}

Let $S$ be the set of simple reflections of $\Wa$, and let $\ell$ be its length function. Put $\A=\Z[v,v^{-1}]$. The Hecke algebra $\HH_{\mathrm a}$ of $(\Wa,S)$ has standard basis $\{T_w:w\in\Wa\}$, with
\[
 T_s^2=1+(v-v^{-1})T_s\quad(s\in S),\qquad
 T_xT_y=T_{xy}\quad\text{if }\ell(xy)=\ell(x)+\ell(y).
\]
The extended group has a decomposition $W=\Omega\ltimes\Wa$, where $\Omega$ is its group of length-zero elements. Conjugation by $\Omega$ permutes $S$, and $\ell(\omega w)=\ell(w)$ for $\omega\in\Omega$ and $w\in\Wa$. The extended Hecke algebra is the corresponding crossed product:
\(
                    \HH=\bigoplus_{\omega\in\Omega}T_\omega\HH_{\mathrm a}.
\)
The bar involution of $\HH$ is determined by $\overline v=v^{-1}$ and $\overline{T_w}=T_{w^{-1}}^{-1}$. For $w\in\Wa$, the Kazhdan--Lusztig basis is normalized by
\[
 \overline{C_w}=C_w,\qquad
 C_w-T_w\in\bigoplus_{\substack{x\in\Wa\\x<w}}v^{-1}\Z[v^{-1}]T_x,
\]
where $<$ denotes the Bruhat order. We extend this basis to $\HH$ by setting $C_{\omega w}=T_\omega C_w$.

We denote the Kazhdan--Lusztig left, right and two-sided preorders on $W$ by $\leq_L$, $\leq_R$ and $\leq_{LR}$, respectively, and the corresponding equivalence relations by $\sim_L$, $\sim_R$ and $\sim_{LR}$. Their equivalence classes are called left, right and two-sided cells \cite{KL79,Lus89}.

A \emph{based ring} is an associative ring $R$ with identity and a specified $\Z$-basis $\{b_i:i\in I\}$ satisfying the following conditions \cite[Section~3.1]{EGNO15}. The structure constants are nonnegative integers, and the identity is a sum of basis elements:
\[
 b_i b_j=\sum_{k\in I}n_{ij}^k b_k,\qquad n_{ij}^k\in\Z_{\geq0},\qquad
 1_R=\sum_{i\in I_0}b_i
\]
for a finite subset $I_0\subset I$. There is an involution $i\mapsto i^*$ of $I$ such that $b_i\mapsto b_{i^*}$ extends to a $\Z$-linear anti-involution of $R$. If $\tau:R\to\Z$ is the linear map defined by $\tau(b_i)=1$ for $i\in I_0$ and $\tau(b_i)=0$ otherwise, then
\[
                         \tau(b_i b_{j^*})=\delta_{ij}.
\]
The index set $I$ need not be finite. An isomorphism of based rings is a ring isomorphism that maps the specified basis bijectively onto the specified basis.

Write
\[
 C_xC_y=\sum_z h_{x,y,z}C_z,\qquad
 \amin(z)=\max_{x,y}\deg_v h_{x,y,z}.
\]
 Let $\gamma_{x,y,z^{-1}}$ be the coefficient of $v^{\amin(z)}$ in $h_{x,y,z}$. The asymptotic ring of $W$ is
\[
 J=\bigoplus_{w\in W}\Z t_w,\qquad
 t_xt_y=\sum_z\gamma_{x,y,z^{-1}}t_z.
\]
With the basis $\{t_w:w\in W\}$, it is a based ring whose anti-involution is $t_w\mapsto t_{w^{-1}}$. For a two-sided cell $\mathbf c$,
\[
 J_{\mathbf c}=\bigoplus_{w\in\mathbf c}\Z t_w
\]
is a two-sided ideal and a direct summand of $J$ as a ring. Its identity is
\begin{equation}\label{eq:cell-unit}
                 1_{\mathbf c}=\sum_{z\in\mathcal D\cap\mathbf c}t_z.
\end{equation}
Here $\mathcal D\subset\Wa$ is the set of distinguished involutions, and each left cell of $W$ contains exactly one element of $\mathcal D$, see \cite{Lus87,Lus89}. Consequently the number of basis elements occurring in \eqref{eq:cell-unit} is the number of left cells in $\mathbf c$.

According to the definition of cells, every left cell of $W$ has the form $\Omega\Gamma$ for a unique left cell $\Gamma$ of $\Wa$, and $(\Omega\Gamma)\cap\Wa=\Gamma$. For every two-sided cell $\mathbf c_{\mathrm a}$ of $\Wa$, the set $\Omega\mathbf c_{\mathrm a}$ is a two-sided cell of $W$, and the numbers of left cells in these two cells agree.

\section{Central extensions and simply connected derived groups}\label{sec:extensions}

All algebraic groups and extensions in this section are defined over $\C$. We first recall how central extensions of a connected reductive group are determined by its derived subgroup. We then give a direct splitting argument and treat the disconnected group that occurs in the symplectic case.

\subsection{Extensions and line bundles}
Let $K$ be an algebraic group. Write $\Extc(K,\Gm)$ for the group of equivalence classes of algebraic central extensions
\begin{equation}\label{eq:central-extension}
 1\longrightarrow\Gm\overset{i}{\longrightarrow}E
   \overset{p}{\longrightarrow}K\longrightarrow1.
\end{equation}
The zero class is the product extension. Thus a class is zero precisely when there is an algebraic group homomorphism $s:K\to E$ with $p\circ s=\id_K$.

Write $\Pic(K)$ for the group of isomorphism classes of line bundles on $K$, with tensor product as the group law and the class of the trivial line bundle as the identity.

Forgetting the multiplication in $E$ gives a $\Gm$-torsor over $K$, and hence a line bundle. This defines a homomorphism
\begin{equation}\label{eq:extension-picard}
             \Extc(K,\Gm)\longrightarrow\Pic(K).
\end{equation}
When $K$ is connected, this map is an isomorphism onto the subgroup of $\Pic(K)$ consisting of primitive line bundle classes, namely those represented by line bundles $\mathcal L$ satisfying
\[
             m^*\mathcal L\simeq
             p_1^*\mathcal L\otimes p_2^*\mathcal L,
\]
where $m:K\times K\to K$ is multiplication and $p_1,p_2$ are the projections; see \cite[Section~1, discussion preceding Theorem~1.1]{Ros21}. Gabber's theorem asserts that every line bundle on a smooth connected unirational algebraic group over $\C$ is primitive \cite[Theorem~1.3]{Ros21}. In particular, this applies to connected reductive groups \cite[Remark~1.4]{Ros21}.

The description of central extensions can be expressed entirely in terms of a semisimple simply connected cover.

\begin{theorem}\label{thm:ext-picard}
Let $K$ be a connected reductive group, let $D=K^{\der}$ be its derived subgroup, and let
\[
                  1\longrightarrow\Pi_D\longrightarrow D^{\mathrm{sc}}
                    \longrightarrow D\longrightarrow1
\]
be the simply connected central cover of $D$. There are isomorphisms
\begin{equation}\label{eq:ext-picard-fundamental}
 \Extc(K,\Gm)\simeq\Pic(K)\simeq\Pic(D)\simeq X^*(\Pi_D),
\end{equation}
where $X^*(\Pi_D)=\operatorname{Hom}_{\mathrm{alg}}(\Pi_D,\Gm)$.
\end{theorem}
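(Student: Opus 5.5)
The plan is to prove the three isomorphisms in \eqref{eq:ext-picard-fundamental} one at a time.

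\emph{First isomorphism.} By the discussion preceding the theorem, \eqref{eq:extension-picard} is an isomorphism onto the subgroup of primitive classes when $K$ is connected. Gabber's theorem \cite[Theorem~1.3, Remark~1.4]{Ros21} says every line bundle on a connected reductive group is primitive. Together these give $\Extc(K,\Gm)\simeq\Pic(K)$ with no further work.

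\emph{Second isomorphism, $\Pic(K)\simeq\Pic(D)$.} Let $Z=Z(K)^\circ$ be the central torus. Multiplication gives an isogeny $D\times Z\to K$ with finite central kernel $F=D\cap Z$. I would rather use the fibration $K\to K/D=:S$, where $S$ is a torus. This makes $K$ a $D$-torsor over $S$, and it is Zariski-locally trivial, since $S$ is a torus and the fibration is a principal bundle for a connected group. Because $\Pic(S)=0$ and $\mathcal O(S)^\times/\C^\times=X^*(S)$, the standard exact sequence (Sansuc, or Fossum--Iversen) for a torsor under a connected group reads
\[
X^*(K)\to X^*(D)\to\Pic(S)\to\Pic(K)\to\Pic(D).
\]
Here $X^*(D)=0$ since $D$ is semisimple. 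This shows restriction $\Pic(K)\to\Pic(D)$ is injective. For surjectivity, I would use that every line bundle on $D$ is $D^{\mathrm{sc}}$-equivariantly linearized, by the next step, and therefore corresponds to a character of $\Pi_D$. That character extends to a character of the preimage of $F$ in $D^{\mathrm{sc}}\times Z$, because characters of finite diagonalizable groups extend to diagonalizable overgroups. This produces a line bundle on $K=(D^{\mathrm{sc}}\times Z)/\widetilde F$ restricting to the given one. If the exact sequence is too delicate to cite, I would instead argue directly with the cover $D^{\mathrm{sc}}\times Z\to K$, using $\Pic(D^{\mathrm{sc}}\times Z)=0$.

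\emph{Third isomorphism, $\Pic(D)\simeq X^*(\Pi_D)$.} I will use the $\Pi_D$-torsor $D^{\mathrm{sc}}\to D$. A character $\chi$ of $\Pi_D$ gives the associated line bundle $D^{\mathrm{sc}}\times^{\Pi_D}\C_\chi$, which defines a map $X^*(\Pi_D)\to\Pic(D)$. For a torsor $Y\to X$ under a finite diagonalizable group $A$, there is an exact sequence
\[
\mathcal O(Y)^\times\to X^*(A)\to\Pic(X)\to\Pic(Y).
\]
For $Y=D^{\mathrm{sc}}$ two facts are needed. First, $\mathcal O(D^{\mathrm{sc}})^\times=\C^\times$, since units on a connected group are scalar multiples of characters (Rosenlicht) and $X^*(D^{\mathrm{sc}})=0$. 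Second, $\Pic(D^{\mathrm{sc}})=0$. These two facts make the map $X^*(\Pi_D)\to\Pic(D)$ bijective.

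The main obstacle is the vanishing $\Pic(D^{\mathrm{sc}})=0$. I would prove it via the Bruhat decomposition. The big cell $U^-TU$ is open in $D^{\mathrm{sc}}$, isomorphic to affine space times a torus, and so has trivial Picard group. Its complement is the union of the divisors $\overline{Bs_\alpha w_0B}$ for simple roots $\alpha$. Hence $\Pic(D^{\mathrm{sc}})$ is generated by these divisors, modulo divisors of units on the big cell. Each such divisor is the divisor of the regular function $g\mapsto\langle v_{\varpi_\alpha}^*, g\,v_{\varpi_\alpha}\rangle$, which is a matrix coefficient of the fundamental representation. These fundamental representations exist precisely because $D^{\mathrm{sc}}$ is simply connected. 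This gives $\Pic(D^{\mathrm{sc}})=0$. The same computation for $D$ itself offers an independent check that $\Pic(D)$ is the cokernel of $X^*(T_D)\hookrightarrow X^*(T_{D^{\mathrm{sc}}})$, namely $X^*(\Pi_D)$.
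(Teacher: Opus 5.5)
Your proposal is correct, and for the second and third isomorphisms it takes a different route from the paper. Your first step is the same as the paper's.

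\textbf{Comparison.} For the other two isomorphisms the paper argues only by citation. Popov's theorem gives $\Pic(K)\simeq\Pic(D)$. Iversen's Proposition~3.8 and Corollary~3.9 give $X^*(\Pi_D)\simeq\Extc(D,\Gm)\simeq\Pic(D)$, so the last isomorphism passes through the explicit pushout extensions $E_\chi$.

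You instead prove $\Pic(D)\simeq X^*(\Pi_D)$ by descent along the finite \'etale $\Pi_D$-cover $D^{\mathrm{sc}}\to D$. The kernel of pullback is $H^1(\Pi_D,\OO(D^{\mathrm{sc}})^\times)=\operatorname{Hom}(\Pi_D,\C^\times)$ because units are constant, and the target $\Pic(D^{\mathrm{sc}})$ vanishes by the big-cell argument. You then get $\Pic(K)\simeq\Pic(D)$ from Sansuc's sequence together with extending characters from $\Pi_D$ to $\widetilde F$.

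This is considerably more self-contained. It uses only Rosenlicht's theorem, factoriality of a Laurent polynomial ring, descent along a free finite quotient, and the fundamental representations of $D^{\mathrm{sc}}$. Your bundle $D^{\mathrm{sc}}\times^{\Pi_D}\C_\chi$ is the one underlying $E_\chi$, up to $\chi\leftrightarrow\chi^{-1}$, so you construct the same map as the paper.

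Your fallback via $D^{\mathrm{sc}}\times Z\to K$ is even more economical. Carried out, it gives
\[
\Pic(K)=H^1(\widetilde F,\OO(D^{\mathrm{sc}}\times Z)^\times)\simeq X^*(\widetilde F)/X^*(F)\simeq X^*(\Pi_D),
\]
which proves the second and third isomorphisms at once, without Popov or Sansuc. The paper's version is shorter and keeps the extension-theoretic description in view. However, all of its content is delegated to the references.

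\textbf{Two small corrections.} First, connectedness of the structure group does not by itself make a torsor over a torus Zariski-locally trivial. For example, $\operatorname{PGL}_2$-torsors over $\Gm^2$ attached to quaternion Azumaya algebras are not. You do not need this, since Sansuc's sequence holds for \'etale torsors. In fact $K\to S$ is globally trivial here. The map $T\to S$ is a surjection of tori with connected kernel $T\cap D$, so it has a homomorphic section $\sigma$. Then $(d,s)\mapsto d\sigma(s)$ identifies $K$ with $D\times S$ as varieties, which gives $\Pic(K)\simeq\Pic(D)$ directly.

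Second, the complement of $U^-TU=B^-B$ is $\bigcup_\alpha\overline{B^-s_\alpha B}$. This is the zero locus of $g\mapsto\langle v^*_{\varpi_\alpha},g\,v_{\varpi_\alpha}\rangle$. The divisors $\overline{Bs_\alpha w_0B}$ you wrote bound $Bw_0B$ instead. You also need this function to vanish to order exactly one along $\overline{B^-s_\alpha B}$. One way is to restrict to the root $\SL_2$, where the function becomes the upper-left matrix entry. Set-theoretic vanishing alone would only show that $\Pic(D^{\mathrm{sc}})$ is finite.
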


\begin{proof}
Since $K$ is connected and reductive, every line bundle on $K$ is primitive by \cite[Theorem~1.3 and Remark~1.4]{Ros21}. Thus the map \eqref{eq:extension-picard} is an isomorphism, giving the first isomorphism in \eqref{eq:ext-picard-fundamental}.

For a connected reductive group, Popov's theorem (\cite[Theorem, p.~794]{Pop23}) states that restriction to the derived subgroup induces an isomorphism of Picard groups. That is, $\iota:D\hookrightarrow K$ induces an isomorphism
\[
                     \iota^*:\Pic(K)\xrightarrow{\sim}\Pic(D).
\]
By Iversen's result (\cite[Proposition~3.8]{Ive76}), central extensions of a semisimple group by a diagonalizable group are classified by homomorphisms from the kernel of its simply connected cover to the diagonalizable group. Taking these groups to be $D$ and $\Gm$, respectively, gives an isomorphism $X^*(\Pi_D)\xrightarrow{\sim}\Extc(D,\Gm)$. Explicitly, writing $q:D^{\mathrm{sc}}\to D$ for the covering map, a character $\chi:\Pi_D\to\Gm$ gives the pushout extension
\[
 E_\chi=(D^{\mathrm{sc}}\times\Gm)\big/
       \{(z,\chi(z)^{-1}):z\in\Pi_D\}\longrightarrow D,
 \qquad [(d,t)]\longmapsto q(d),
\]
whose kernel is the copy of $\Gm$ embedded by $t\mapsto[(1,t)]$. By \cite[Corollary~3.9]{Ive76}, the map $\Extc(D,\Gm)\to\Pic(D)$ that forgets the group structure is also an isomorphism. Hence the composite map, assigning to $\chi$ the line bundle associated with $E_\chi\to D$, gives an isomorphism $X^*(\Pi_D)\xrightarrow{\sim}\Pic(D)$, as required.
\end{proof}

\begin{corollary}\label{cor:ext-splitting}
Let $K$ be a connected reductive group and let $D=K^{\der}$. The following conditions are equivalent:
\begin{enumerate}
\item $D$ is simply connected;
\item $\Pic(K)=0$;
\item every algebraic central extension of $K$ by $\Gm$ splits.
\end{enumerate}
\end{corollary}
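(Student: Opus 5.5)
The plan is to read Corollary~\ref{cor:ext-splitting} off the chain of isomorphisms in Theorem~\ref{thm:ext-picard},
\[
\Extc(K,\Gm)\simeq\Pic(K)\simeq\Pic(D)\simeq X^*(\Pi_D).
\]
Since the three conditions are vanishing statements about groups in this chain, it suffices to show that each vanishes exactly when $\Pi_D$ is trivial.

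Condition (3) says that every class in $\Extc(K,\Gm)$ is the zero class, since a class is zero exactly when the extension splits. So (3) is equivalent to $\Extc(K,\Gm)=0$. Condition (2) says $\Pic(K)=0$. By the first two isomorphisms of Theorem~\ref{thm:ext-picard}, both are equivalent to $X^*(\Pi_D)=0$. This gives (2)$\Leftrightarrow$(3).

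It remains to show that $X^*(\Pi_D)=0$ if and only if $D$ is simply connected.
\begin{itemize}
\item $\Pi_D$ is the kernel of the central isogeny $D^{\mathrm{sc}}\to D$. It is a finite subgroup of the center of the semisimple group $D^{\mathrm{sc}}$, and in characteristic zero it is a finite abelian diagonalizable group.
\item For such a group $\Pi$, the character group $X^*(\Pi)=\operatorname{Hom}(\Pi,\Gm)$ is the Pontryagin dual and is (non-canonically) isomorphic to $\Pi$. Hence $X^*(\Pi_D)=0$ if and only if $\Pi_D=1$.
\item $\Pi_D=1$ means the covering map $D^{\mathrm{sc}}\to D$ is an isomorphism, that is, $D$ is simply connected.
\end{itemize}
This proves (1)$\Leftrightarrow$(2).

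The only step needing a little care is this last identification: one must use that $\Pi_D$ is finite and diagonalizable, so that its characters separate points. This holds because $\Pi_D$ is a finite central subgroup over $\C$. Everything else is formal from Theorem~\ref{thm:ext-picard}, so I expect no real obstacle.
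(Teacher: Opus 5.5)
Your proposal is correct and follows essentially the same route as the paper. You read both equivalences off the chain $\Extc(K,\Gm)\simeq\Pic(K)\simeq\Pic(D)\simeq X^*(\Pi_D)$ of Theorem~\ref{thm:ext-picard}, using that the finite diagonalizable group $\Pi_D$ has $X^*(\Pi_D)=0$ exactly when $\Pi_D=1$, and that an extension splits exactly when its class is zero.
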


\begin{proof}
Let $\Pi_D$ be the kernel of the simply connected cover of $D$. Then $D$ is simply connected if and only if $\Pi_D=1$. Since $\Pi_D$ is finite and diagonalizable, this is equivalent to $X^*(\Pi_D)=0$. By Theorem~\ref{thm:ext-picard}, it is therefore equivalent to $\Pic(K)=0$, proving the equivalence of (1) and (2). The same theorem gives $\Extc(K,\Gm)\simeq\Pic(K)$. Since a central extension splits precisely when its class is zero, (2) and (3) are equivalent.
\end{proof}

\subsection{A direct construction of a splitting}
We give an explicit construction of the splitting in Corollary~\ref{cor:ext-splitting}.

\begin{lemma}\label{lem:basic-splittings}
Every algebraic central extension of a torus by $\Gm$ splits. Every algebraic central extension of a simply connected semisimple group by $\Gm$ has a unique splitting.
\end{lemma}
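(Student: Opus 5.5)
The plan is to build the splittings directly from the structure of each kind of group, avoiding Picard groups; the section promises an explicit construction.

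\emph{Torus case.} Let $1\to\Gm\to E\to T'\to1$ be a central extension with $T'$ a torus. Then $E$ is connected: it is a $\Gm$-torsor over the connected variety $T'$. It is also commutative. Indeed, the commutator map $E\times E\to E$ factors through $T'\times T'$ because the kernel is central, and it lands in $\Gm$ because $T'$ is abelian. It is therefore a morphism $T'\times T'\to\Gm$ that is bimultiplicative, and any such map is $(t,u)\mapsto\chi(t)(u)$ for a homomorphism $\chi:T'\to X^*(T')$. A homomorphism from a connected group to a discrete lattice is trivial, so the commutator map is trivial. Next, $E$ is a connected commutative group consisting of semisimple elements: it is an extension of a torus by a torus, so its unipotent part maps injectively to $T'$ and hence vanishes. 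Thus $E$ is a torus. The sequence $0\to X^*(T')\to X^*(E)\to X^*(\Gm)=\Z\to0$ is exact (surjectivity by extending characters from subtori), and it splits because $\Z$ is free. A character $\lambda$ of $E$ restricting to the identity on $\Gm$ gives $\ker\lambda\simeq T'$, which provides the splitting $T'\to E$.

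\emph{Simply connected semisimple case.} For uniqueness, two splittings $s,s'$ differ by a homomorphism $K\to\Gm$. Since $K$ is semisimple, $K=[K,K]$, so this homomorphism is trivial. For existence, put $E'=[E,E]$. The image of $E'$ in $K$ is $[K,K]=K$. The intersection $E'\cap\Gm$ is central, and in fact $E$ is reductive with $E=E'\cdot\Gm$, where $E'$ is the derived subgroup of the connected reductive group $E$. Connectedness of $E$ follows as in the torus case, and reductivity holds because its unipotent radical maps to a normal unipotent subgroup of $K$, hence is trivial. It follows that $E'$ is semisimple and $E'\to K$ is surjective with finite central kernel $E'\cap\Gm$. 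Since $E'$ is connected and $K$ is simply connected, this isogeny is an isomorphism. Its inverse $K\to E'\subset E$ is the splitting.

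\emph{Main obstacle.} The step most likely to need care is showing that $E'\cap\Gm$ is finite, equivalently that $E'$ is semisimple rather than containing the central $\Gm$. I would argue via Lie algebras. The extension $\mathrm{Lie}\,E=\mathrm{Lie}\,K\oplus\C$ of the semisimple algebra $\mathrm{Lie}\,K$ is central and hence splits (Whitehead). So $[\mathrm{Lie}\,E,\mathrm{Lie}\,E]$ is a complement to $\mathrm{Lie}\,\Gm$, and $\dim E'=\dim K$. Consequently $E'\cap\Gm$ is zero-dimensional and thus finite, and $E'\to K$ is an isogeny.
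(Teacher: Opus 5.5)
Your proposal is correct and follows essentially the same route as the paper: in the torus case you show $E$ is a torus and split the character sequence, and in the simply connected case you invert the isogeny $[E,E]\to K$ and obtain uniqueness from $K=[K,K]$. The differences are minor. You get commutativity of $E$ from the bimultiplicative commutator pairing and rigidity, whereas the paper observes that the derived group of the reductive group $E$ lies in the central $\Gm$ and is therefore trivial. Your Whitehead-lemma argument for the finiteness of $E'\cap\Gm$ is redundant once you know $E'$ is semisimple, since $E'\cap\Gm$ then lies in the finite center of $E'$.
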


\begin{proof}
A central extension $E$ of a connected reductive group $K$ by $\Gm$ is connected and reductive. Indeed, connectedness follows from that of the kernel and quotient. The image of the unipotent radical of $E$ is a connected normal unipotent subgroup of $K$, hence is trivial. Thus this radical lies in $\Gm$ and must itself be trivial.

If $K$ is a torus $S$, the derived subgroup of $E$ is contained in the central kernel and must be trivial. Thus $E$ is a torus. The character sequence is
\[
 0\longrightarrow X^*(S)\longrightarrow X^*(E)
   \longrightarrow X^*(\Gm)\longrightarrow0.
\]
It splits because $X^*(\Gm)\simeq\Z$ is free. The anti-equivalence between tori and character lattices gives a splitting of the group extension.

If $K$ is a simply connected semisimple group $D$, the map $E^{\der}\to D$ is a central isogeny. By simple connectedness it is an isomorphism, and its inverse gives a splitting. To see uniqueness, let $s_1,s_2:D\to E$ be two splittings of $p:E\to D$. Since $p\circ s_1=p\circ s_2=\id_D$, the regular map
\[
                     \chi(d)=s_2(d)s_1(d)^{-1}
\]
has image contained in $\ker(p)=\Gm$. The centrality of this kernel gives
\[
 \chi(xy)=s_2(x)\chi(y)s_1(x)^{-1}
          =\chi(y)s_2(x)s_1(x)^{-1}
          =\chi(x)\chi(y).
\]
Thus $\chi:D\to\Gm$ is an algebraic character. Since $\Gm$ is commutative, $\chi$ is trivial on the derived subgroup $[D,D]$. A connected semisimple group satisfies $[D,D]=D$, so $\chi=1$ and hence $s_1=s_2$.
\end{proof}

\begin{lemma}\label{lem:extend-character}
Let $A$ be a finite closed subgroup of a torus $S$. Every algebraic character of $A$ extends to an algebraic character of $S$.
\end{lemma}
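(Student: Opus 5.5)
We prove Lemma~\ref{lem:extend-character} by passing to character groups and using duality for diagonalizable groups over $\C$. Write $X^*(\cdot)=\operatorname{Hom}_{\mathrm{alg}}(\cdot,\Gm)$.

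The anti-equivalence between diagonalizable groups and finitely generated abelian groups sends closed subgroups to quotients. Concretely, the closed embedding $A\hookrightarrow S$ corresponds to a surjection of character groups $X^*(S)\to X^*(A)$. The statement of the lemma is exactly this surjectivity: every algebraic character of $A$ is the restriction of some algebraic character of $S$.

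We justify this without citing the full anti-equivalence. Identify $S\simeq\Gm^r$. Then the coordinate ring is
\[
\C[S]=\C[X^*(S)]=\C[x_1^{\pm1},\dots,x_r^{\pm1}],
\]
whose characters $x^\lambda$, for $\lambda\in X^*(S)$, form a $\C$-basis. Since $A$ is closed, the restriction map $\C[S]\to\C[A]$ is surjective. Hence $\C[A]$ is spanned by the restrictions $x^\lambda|_A$.

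Now take a character $\chi$ of $A$, regarded as an element of $\C[A]$. Write
\[
\chi=\sum_\lambda c_\lambda\, x^\lambda|_A .
\]
Group the $\lambda$ according to their restriction to $A$. The restrictions are characters of $A$, so after grouping we get an expression of $\chi$ as a linear combination of distinct characters of $A$. By Artin's theorem on linear independence of characters, applied to the finite group $A$, the character $\chi$ must coincide with one of the restrictions $x^\lambda|_A$ having nonzero total coefficient. So $\chi$ extends to the character $x^\lambda$ of $S$.

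The main subtlety is the surjectivity of $\C[S]\to\C[A]$, which holds simply because $A$ is a closed subvariety of the affine variety $S$. Since $A$ is finite and reduced over $\C$, no other issue arises.
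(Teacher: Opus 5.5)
Your proof is correct, but it proves directly what the paper only cites. The paper applies the character functor to $1\to A\to S\to S/A\to 1$ and invokes exactness of $X^*$ on diagonalizable groups, i.e.\ the anti-equivalence with finitely generated abelian groups. It then reads off surjectivity of the restriction map $X^*(S)\to X^*(A)$. That is a one-line argument, but it rests on the full duality theorem.

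You instead derive exactly the needed surjectivity from two elementary facts. First, restriction $\C[S]\to\C[A]$ is onto because $A$ is closed in the affine variety $S$, so the restricted characters $x^\lambda|_A$ span $\C[A]$. Second, Dedekind--Artin independence of distinct characters forces $\chi$ to be one of these restrictions. Indeed, if $\chi$ differed from every restricted character $\psi$ with nonzero grouped coefficient $d_\psi$, then $\chi-\sum d_\psi\psi=0$ would be a nontrivial relation among distinct characters.

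This is essentially the standard textbook proof of that part of the exactness. Your route therefore buys self-containment at the cost of a few lines. Finiteness of $A$ plays no essential role in your argument: it works verbatim for any closed subgroup of a torus, once one notes that an algebraic character of $A$ is a regular function on $A$.
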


\begin{proof}
The groups in $1\to A\to S\to S/A\to1$ are diagonalizable. Applying the character functor gives the exact sequence
\[
 0\longrightarrow X^*(S/A)\longrightarrow X^*(S)
   \longrightarrow X^*(A)\longrightarrow0.
\]
Here the map $X^*(S)\to X^*(A)$ is restriction, $\chi\mapsto\chi|_A$. Its surjectivity means that every character of $A$ extends to a character of $S$.
\end{proof}

\begin{proposition}\label{lem:connected-splitting}
Let $K$ be a connected reductive group whose derived subgroup is simply connected. Every algebraic central extension of $K$ by $\Gm$ splits.
\end{proposition}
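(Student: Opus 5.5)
The plan is to build the splitting directly from the two pieces of $K$ handled by Lemma~\ref{lem:basic-splittings}, glued along their intersection with the help of Lemma~\ref{lem:extend-character}. (Formally the statement already follows from Corollary~\ref{cor:ext-splitting}, but the point of this subsection is an explicit construction.) Write $D=K^{\der}$, which is simply connected by hypothesis, and let $S=Z(K)^\circ$ be the radical torus. Multiplication gives a central isogeny
\[
 \mu:S\times D\longrightarrow K,\qquad (s,d)\longmapsto sd,
\]
whose kernel is $\{(a,a^{-1}):a\in A\}$ with $A=S\cap D$. The group $A$ is finite, since it lies in $Z(D)$. Let $1\to\Gm\to E\xrightarrow{p}K\to1$ be a central extension, and put $E_S=p^{-1}(S)$ and $E_D=p^{-1}(D)$. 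These are central extensions of $S$ and $D$ by $\Gm$.

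First, Lemma~\ref{lem:basic-splittings} gives a splitting $\sigma:S\to E_S$ and the unique splitting $\tau:D\to E_D$. Second, I compare the two on $A$. For $a\in A$, both $\sigma(a)$ and $\tau(a)$ lie over $a$, so $\psi(a)=\sigma(a)\tau(a)^{-1}\in\Gm$. Since the kernel is central and $\sigma,\tau$ are homomorphisms, $\psi:A\to\Gm$ is a character; this is the same computation as in the uniqueness part of Lemma~\ref{lem:basic-splittings}. By Lemma~\ref{lem:extend-character}, $\psi$ extends to a character $\tilde\psi:S\to\Gm$. Replace $\sigma$ by $\sigma'(s)=\tilde\psi(s)^{-1}\sigma(s)$. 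This is still a splitting over $S$, and now $\sigma'|_A=\tau|_A$.

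Third, I define $\Sigma:S\times D\to E$ by $\Sigma(s,d)=\sigma'(s)\tau(d)$. To see that $\Sigma$ is a homomorphism, I need $\sigma'(S)$ to commute with $\tau(D)$. For fixed $s$, the map $d\mapsto\sigma'(s)\tau(d)\sigma'(s)^{-1}$ is a splitting of $E_D$: conjugation by $\sigma'(s)$ preserves $E_D$, because $p(\sigma'(s))=s$ is central in $K$, and it covers the identity of $D$. By the uniqueness in Lemma~\ref{lem:basic-splittings}, this map equals $\tau$. Hence $\Sigma$ is a morphism of algebraic groups lifting $\mu$. It is trivial on $\ker\mu$, since $\Sigma(a,a^{-1})=\sigma'(a)\tau(a)^{-1}=1$. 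Therefore $\Sigma$ descends to a homomorphism $s:K=(S\times D)/\ker\mu\to E$ with $p\circ s=\id_K$. The descent is algebraic because $\mu$ is a quotient map by a finite normal subgroup.

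The main obstacle is the compatibility on $A=S\cap D$. If the independent splittings over $S$ and $D$ disagree on $A$, they cannot be glued. The extension lemma for characters of finite subgroups of tori is exactly what removes this discrepancy, and the uniqueness of the splitting over the simply connected $D$ is what guarantees commutativity. Simple connectedness of $D$ is used in both places.
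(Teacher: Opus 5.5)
Your proposal is correct and is essentially the paper's proof: splittings over $Z(K)^\circ$ and $D$ from Lemma~\ref{lem:basic-splittings}, correction of their discrepancy on $A=Z(K)^\circ\cap D$ by a character extended via Lemma~\ref{lem:extend-character}, and descent of the product map through $(Z(K)^\circ\times D)/\{(a,a^{-1}):a\in A\}\simeq K$. The only difference is that you deduce commutativity of the two images from the uniqueness of the splitting over $D$, whereas the paper uses the commutator character $d\mapsto[s_Z(z),s_D(d)]$; this is the same argument in disguise, and it needs only $D=[D,D]$, since simple connectedness of $D$ is used only for the existence of $\tau$.
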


\begin{proof}
Given an extension $1\longrightarrow\Gm\overset{i}{\longrightarrow}E
\overset{p}{\longrightarrow}K\longrightarrow1$,  put
\[
                 D=K^{\der},\qquad Z=Z(K)^\circ,
                 \qquad A=Z\cap D.
\]
Then $K=ZD$, and $A$ is finite. By Lemma~\ref{lem:basic-splittings}, the restrictions of the extension to $D$ and $Z$ admit splittings
\[
                         s_D:D\to E,\qquad s_Z:Z\to E.
\]
Their images commute. In fact, for fixed $z\in Z, d\in D$ the commutator $[s_Z(z),s_D(d)]$ lies in the central kernel $\Gm$, and the map
\[
     D\to \Gm,\qquad                    d\longmapsto[s_Z(z),s_D(d)]
\]
is an algebraic character of $D$. Since $D$ is semisimple, this character is trivial.

The two splittings $s_Z, s_D$ may differ on $A=Z\cap D$. Define their difference by
\[
               \phi(a)=s_D(a)s_Z(a)^{-1}\in\Gm
               \qquad(a\in A).
\]
This is a character of $A$. By Lemma~\ref{lem:extend-character}, we can choose a character $\chi:Z\to\Gm$ with $\chi|_A=\phi$, and replace $s_Z$ by
\[
                         s_Z'(z)=i(\chi(z))s_Z(z).
\]
Then $s_Z'$ and $s_D$ agree on $A$, and their images still commute. Consequently the algebraic homomorphism
\[
                 Z\times D\longrightarrow E,
                 \qquad(z,d)\longmapsto s_Z'(z)s_D(d)
\]
is trivial on $\{(a,a^{-1}):a\in A\}$. It descends to a homomorphism $s:K\to E$ through the algebraic quotient
\[
                 (Z\times D)/\{(a,a^{-1}):a\in A\}\simeq K.
\]
For $z\in Z$ and $d\in D$, the definition of $s$ gives
\[
 p(s(zd))=p(s_Z'(z)s_D(d))
         =p(i(\chi(z)))p(s_Z(z))p(s_D(d))=zd,
\]
since $i(\chi(z))\in\ker(p)$ and $s_Z,s_D$ split the restrictions of $p$. Every element of $K$ has the form $zd$, so $p\circ s=\id_K$. Thus $s$ is the desired splitting.
\end{proof}

\subsection{Disconnected groups and projective representations}
The comparison with the Picard group above uses connectedness. We treat the disconnected groups needed here directly.

\begin{lemma}\label{lem:cyclic-splitting}
Let $H$ be a simply connected semisimple group and let $\mu_m$ be a finite cyclic group. Every algebraic central extension of $H\times\mu_m$ by $\Gm$ splits.
\end{lemma}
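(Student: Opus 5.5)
Let $1\to\Gm\xrightarrow{i}E\xrightarrow{p}H\times\mu_m\to1$ be a central extension. The plan is to split it separately over the two factors $H$ and $\mu_m$, and then check that the two splittings commute.

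\emph{Splitting over $H$.} Restrict the extension to $H=H\times\{1\}$. By Lemma~\ref{lem:basic-splittings} the restriction has a unique splitting $s_H:H\to E$.

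\emph{Splitting over $\mu_m$.} Let $\zeta$ be a generator of $\mu_m$ and pick any $g\in E$ with $p(g)=(1,\zeta)$. Then $p(g^m)=1$, so $g^m=i(c)$ for some $c\in\C^\times$. Choose an $m$-th root $c'$ of $c$ and replace $g$ by $g'=i(c')^{-1}g$. Since $i(\Gm)$ is central, $(g')^m=1$. Hence $\zeta^k\mapsto (g')^k$ is a well-defined homomorphism $s_\mu:\mu_m\to E$ splitting $p$ over $\mu_m$. It is algebraic because $\mu_m$ is finite.

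\emph{Commutation of the two splittings.} This is the step that needs care. Conjugation by $g'$ normalizes $p^{-1}(H\times1)$, because $(1,\zeta)$ commutes with $H\times1$. So $h\mapsto g's_H(h)g'^{-1}$ is another splitting of $p$ over $H$: its image under $p$ is $(1,\zeta)(h,1)(1,\zeta)^{-1}=(h,1)$. By the uniqueness part of Lemma~\ref{lem:basic-splittings}, it equals $s_H$. Thus $g'$ commutes with $s_H(H)$.

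\emph{Assembling the splitting.} Define
\[
s(h,\zeta^k)=s_H(h)\,(g')^k .
\]
This is an algebraic homomorphism because the two images commute, and $p\circ s=\id$. Hence the extension splits.

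The only subtle point is the commutation step, and it follows directly from uniqueness of splittings over simply connected semisimple groups. One could alternatively argue, as in Proposition~\ref{lem:connected-splitting}, via commutator characters $H\to\Gm$, which must be trivial because $H=[H,H]$.
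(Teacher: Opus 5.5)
Your proposal is correct and follows essentially the same argument as the paper: a unique splitting over $H$, a lift of the generator of $\mu_m$ rescaled to have order $m$, commutation with $s_H(H)$ obtained from uniqueness of the splitting, and then assembly of the two pieces. The only difference is cosmetic: you rescale the lift before proving commutation, whereas the paper rescales afterwards. The order does not matter, since rescaling by a central element does not affect commutation.
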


\begin{proof}
Write the extension as
\[
 1\longrightarrow\Gm\overset{i}{\longrightarrow}E
   \overset{p}{\longrightarrow}H\times\mu_m\longrightarrow1.
\]
By Lemma~\ref{lem:basic-splittings}, the restriction over $H\times\{1\}$ has a unique splitting $s:H\to E$, with $p(s(h))=(h,1)$. Choose a generator $\zeta$ of $\mu_m$ and an element $u\in E$ such that $p(u)=(1,\zeta)$. The algebraic homomorphism $h\mapsto us(h)u^{-1}$ is another splitting over $H\times\{1\}$, since
\[
 p(us(h)u^{-1})=(1,\zeta)(h,1)(1,\zeta)^{-1}=(h,1).
\]
Uniqueness therefore gives $us(h)u^{-1}=s(h)$ for every $h\in H$. Thus $u$ commutes with $s(H)$.

Since $p(u^m)=(1,\zeta^m)=(1,1)$, we can write $u^m=i(c)$ for some $c\in\C^\times$. Choose $a\in\C^\times$ with $a^m=c^{-1}$, and put $v=i(a)u$. The kernel is central, so $v$ still commutes with $s(H)$, and
\[
             v^m=i(a^m)u^m=i(a^m c)=1,
             \qquad p(v)=(1,\zeta).
\]
We can now define
\[
 \sigma:H\times\mu_m\longrightarrow E,
 \qquad \sigma(h,\zeta^j)=s(h)v^j \quad(j\in\Z).
\]
The relation $v^m=1$ makes this well defined, and the commutation of $v$ with $s(H)$ shows that $\sigma$ is an algebraic group homomorphism.  Finally,
\[
 p(\sigma(h,\zeta^j))=p(s(h)v^j)=(h,\zeta^j),
\]
so $p\circ\sigma=\id_{H\times\mu_m}$. Thus $\sigma$ is a splitting of $p$.
\end{proof}

\begin{example}\label{ex:disconnected-extension}
Vanishing of the Picard group does not suffice for a disconnected group. Let
\[
 A=\begin{pmatrix}1&0\\0&-1\end{pmatrix},
 \qquad B=\begin{pmatrix}0&1\\1&0\end{pmatrix}.
\]
Their classes in $\operatorname{PGL}_2$ generate a subgroup $Q\simeq\mu_2\times\mu_2$. The inverse image of $Q$ in $\GL_2$ is a central extension of $Q$ by $\Gm$. Every lift of $[A]$ is a scalar multiple of $A$, and every lift of $[B]$ is a scalar multiple of $B$. Since $AB=-BA$, these lifts cannot commute. If the extension admitted a splitting $s:Q\to\GL_2$, then the commutativity of $[A]$ and $[B]$ in $Q$ would give $s([A])s([B])=s([B])s([A])$, because $s$ is a group homomorphism. But $s([A])$ and $s([B])$ are lifts of $[A]$ and $[B]$, respectively, contradicting the preceding observation. Thus the extension does not split. Nevertheless $\Pic(Q)=0$, since $Q$ is a finite disjoint union of points.
\end{example}

\section{The centralizer of a highest-root triple}\label{sec:centralizer}

We now determine the reductive centralizer for the minimal orbit and apply the splitting results of Section~\ref{sec:extensions}. Choose a Borel subgroup $B\supset T$, with Lie algebra $\bfr$, positive roots $\Phi^+$ and simple roots $\Delta$. Let $\theta$ be the highest root. Choose root vectors $e=e_\theta$ and $f=e_{-\theta}$ so that $(e,h,f)$ is an $\mathfrak{sl}_2$-triple with $h=d\theta^\vee(1)$. By \cite[Theorem~4.3.3]{CM93}, the element $e$ belongs to the minimal nonzero nilpotent orbit. Put
\[
        L=Z_G(\theta^\vee(\Gm)),\qquad H=[L,L].
\]

\begin{proposition}\label{prop:centralizer}
There is a character $\theta_L:L\to\Gm$ whose restriction to $T$ is $\theta$, and
\begin{equation}\label{eq:centralizer-kernel}
                  \Fe=\ker\theta_L,\qquad
                  [\Fe^\circ,\Fe^\circ]=H.
\end{equation}
The group $H$ is simply connected. Moreover,
\[
                            \Fe=\Fe^\circ Z(G).
\]
Except in the symplectic family, $\Fe$ is connected. In that family, including $\SL_2\simeq\Sp_2$,
\begin{equation}\label{eq:symplectic-centralizer}
                        \Fe\simeq\Sp_{2r-2}\times\mu_2
                        \quad(G=\Sp_{2r}).
\end{equation}
\end{proposition}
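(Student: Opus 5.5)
First I will show that $\Fe$ lies in $L$ and is cut out there by a character. Any $g\in Z_G(e,h,f)$ fixes $h=d\theta^\vee(1)$, so it centralizes the one-parameter subgroup $\theta^\vee(\Gm)$; indeed $Z_G(h)$ is connected and equals the Levi $L$, since $G$ is simply connected and $h$ is semisimple. Thus $\Fe=Z_L(e,f)$. The group $L$ preserves the eigenspaces of $\Ad(\theta^\vee(t))$. The weight-$2$ space $\g_2$ is spanned by $e_\theta$ alone, because $\langle\alpha,\theta^\vee\rangle=2$ forces $\alpha=\theta$. Hence $L$ acts on $\C e$ by a character $\theta_L$, and its restriction to $T$ is $\theta$. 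Likewise $L$ acts on $\g_{-2}=\C f$ by $\theta_L^{-1}$, via the Killing pairing. Therefore $Z_L(e)=Z_L(e,f)=\ker\theta_L$, which gives $\Fe=\ker\theta_L$.

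Next I will identify $H$ and the identity component. The roots of $L$ are the roots orthogonal to $\theta$, and they form the subsystem $\Phi_\theta=\{\alpha:\langle\alpha,\theta^\vee\rangle=0\}$. Since $G$ is simply connected, the derived group of any Levi subgroup is simply connected; one sees this because its coroot lattice is a saturated sublattice of $X_*(T)=\Z\Phi^\vee$, spanned by a subset of a basis after Weyl conjugation to a standard Levi. This shows $H$ is simply connected. The group $H$ lies in $\ker\theta_L$ because $H$ is perfect. Write $L=Z(L)^\circ H$ with $Z(L)^\circ=\theta^\vee(\Gm)\cdot S$; a dimension count shows $Z(L)^\circ=\theta^\vee(\Gm)$, since the center of $\mathfrak l$ is one-dimensional. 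Then $\ker\theta_L=H\cdot\{t\in\theta^\vee(\Gm):\theta(t)=1\}$. Because $\theta(\theta^\vee(t))=t^2$, this gives $\Fe=H\cdot\theta^\vee(\mu_2)$. Consequently $\Fe^\circ=H$, $[\Fe^\circ,\Fe^\circ]=H$, and $\Fe/H$ has order at most $2$.

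It remains to compare with $Z(G)$ and decide connectedness. The key point is whether $\theta^\vee(-1)$ lies in $H$, and this is the step I expect to need the most care. The element $\theta^\vee(-1)$ acts on each root space $\g_\alpha$ by $(-1)^{\langle\alpha,\theta^\vee\rangle}$. I will use the fact that for $\alpha\neq\pm\theta$ the pairing $\langle\alpha,\theta^\vee\rangle$ lies in $\{0,\pm1\}$.
\begin{itemize}
\item If $\theta^\vee$ lies in the coroot lattice of $\Phi_\theta$ modulo $2X_*(T)$, then $\theta^\vee(-1)\in H$ and $\Fe=H$ is connected.
\item Otherwise, I will show $\theta^\vee(-1)\in Z(G)\cdot H$ directly.
\end{itemize}
The plan is a case check on root data. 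In the non-symplectic types, $\theta$ is a long root lying in a long-root $A_1$ attached to a node adjacent to a branch of the extended Dynkin diagram, and one exhibits $\theta^\vee$ as a sum of coroots of $\Phi_\theta$ plus $2X_*(T)$. In type $C_r$, the highest root is $2\epsilon_1$, so $\theta^\vee=\epsilon_1^*$. Then $\theta^\vee(-1)=\mathrm{diag}(-1,1,\dots,1,-1)$, which is not in $H=\Sp_{2r-2}$. However, $\theta^\vee(-1)$ times $-1\in Z(G)$ lies in $H$. This gives $\Fe=H\times\theta^\vee(\mu_2)\simeq\Sp_{2r-2}\times\mu_2$, and in all cases $\Fe=\Fe^\circ Z(G)$. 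For the non-symplectic uniform claim I would first try the following: $\theta^\vee(-1)$ acts trivially on every root space except possibly those with $\langle\alpha,\theta^\vee\rangle=\pm1$. I would then compare it with the central elements of $G$ using fundamental coweights. Failing a uniform argument, I would do the explicit type-by-type check.
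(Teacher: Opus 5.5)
There is a genuine gap in your second step. The claim that the center of $\mathfrak l$ is one-dimensional is false in type $A_r$ with $r\geq2$. In general $\dim Z(L)^\circ$ equals the number of simple roots not orthogonal to $\theta$, i.e.\ the number of fundamental weights occurring in $\theta$. Since $\theta=\omega_1+\omega_r$ in type $A_r$, this number is two, and $Z(L)^\circ$ is a two-dimensional torus. So $\Fe=H\cdot\theta^\vee(\mu_2)$, $\Fe^\circ=H$ and $|\Fe/H|\leq2$ all fail there. Already for $G=\SL_3$ with $e=E_{13}$ and $h=\operatorname{diag}(1,0,-1)$, one has $L=T$, $H=1$ and $\Fe=\{\operatorname{diag}(a,a^{-2},a):a\in\C^\times\}\simeq\Gm$, whereas your formula gives a group of order two. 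More generally $\Fe^\circ$ strictly contains $H=\SL_{r-1}$; compare Remark~\ref{rem:type-A}. Your dichotomy on $\theta^\vee(-1)$ also breaks down: for $\SL_3$ the element $\theta^\vee(-1)=\operatorname{diag}(-1,1,-1)$ lies neither in $H$ nor in $Z(G)H=\mu_3$, yet $\Fe$ is connected, because $\theta^\vee(-1)$ lies in the torus part of $\Fe^\circ$. Hence your arguments for $[\Fe^\circ,\Fe^\circ]=H$, for connectedness, and for $\Fe=\Fe^\circ Z(G)$ do not cover type $A$.

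The repair is to make no assumption on $\dim Z(L)^\circ$, as the paper does. For the derived group, use $H\subseteq[\Fe^\circ,\Fe^\circ]$, since $H$ is connected and perfect, and $[\Fe^\circ,\Fe^\circ]\subseteq H$, since $\Fe/H$ embeds in the torus $L/H$. For connectedness, identify $X^*(L/H)$ with $\bigoplus_j\Z\omega_j$, the sum over the simple roots $\alpha_j$ not orthogonal to $\theta$. Then $\Fe/H=\ker\bar\theta$ is connected if and only if $\theta$ is primitive in this lattice. This holds in type $A$ and in every other non-symplectic type. In type $C_r$ one has $\theta=2\omega_1$, and your explicit computation with $\theta^\vee(-1)$ and $-1\in Z(G)$ is correct. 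Your $\theta^\vee(-1)$ test can also be made uniform in the types where $Z(L)^\circ=\theta^\vee(\Gm)$, so no case check is needed there. Let $\alpha_j$ be the unique simple root not orthogonal to $\theta$, and write $\theta^\vee=m_j^\vee\alpha_j^\vee+\mu$ with $\mu$ in the coroot lattice of $L$. Then $m_j^\vee\langle\theta,\alpha_j^\vee\rangle=\langle\theta,\theta^\vee\rangle=2$. Hence $m_j^\vee=2$ and $\theta^\vee(-1)=\mu(-1)\in H$, unless $\theta=2\omega_j$, which happens only in the symplectic family.
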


\begin{proof}
The highest root is long. If a root $\alpha$ satisfies $\langle\alpha,\theta^\vee\rangle=2$, then $\alpha=\theta$. Indeed, the Cauchy--Schwarz inequality and the maximality of the length of $\theta$ give
\[
 \langle\alpha,\theta^\vee\rangle
     =\frac{2(\alpha,\theta)}{(\theta,\theta)}
     \leq 2\frac{\|\alpha\|}{\|\theta\|}\leq2,
\]
and equality forces $\alpha=\theta$. The analogous statement holds for weight $-2$. Thus the weight $2$ and $-2$ spaces for $\theta^\vee$ in $\g$ are $\g_\theta$ and $\g_{-\theta}$.

We use the standard equality $Z_G(h)=Z_G(\theta^\vee(\Gm))=L$, valid over $\C$.

The adjoint action of $L$ commutes with that of $\theta^\vee(\Gm)$ and therefore preserves each of its weight spaces. In particular, it preserves the weight $2$ and $-2$ spaces, which we have shown to be $\g_\theta=\C e$ and $\g_{-\theta}=\C f$. These one-dimensional representations define algebraic characters $\theta_L,\eta:L\to\Gm$ by
\[
 \operatorname{Ad}(l)e=\theta_L(l)e,
 \qquad \operatorname{Ad}(l)f=\eta(l)f\qquad(l\in L).
\]
For $t\in T$, the root-space definition gives $\operatorname{Ad}(t)e=\theta(t)e$, so $\theta_L|_T=\theta$.

To relate the two characters, use $[e,f]=h$ and the fact that $L$ fixes $h$. For every $l\in L$, preservation of the Lie bracket gives
\[
 h=\operatorname{Ad}(l)h
   =[\operatorname{Ad}(l)e,\operatorname{Ad}(l)f]
   =\theta_L(l)\eta(l)h.
\]
Since $h\ne0$, it follows that $\eta(l)=\theta_L(l)^{-1}$. Any element centralizing the triple belongs to $Z_G(h)=L$. Within $L$, an element fixes $e$ exactly when $\theta_L(l)=1$, and then it also fixes $f$. Consequently
\[
             \Fe=Z_G(e,h,f)=\{l\in L:\theta_L(l)=1\}
                 =\ker\theta_L,
\]
which proves the first equality in \eqref{eq:centralizer-kernel}.

The root system of $L$ is
\[
                  \Phi_0=\{\alpha\in\Phi:
                                \langle\alpha,\theta^\vee\rangle=0\}.
\]
Since $\theta$ is dominant, $(\alpha_i,\theta)\geq0$ for every simple root $\alpha_i$. Hence $\langle\alpha_i,\theta^\vee\rangle\geq0$, so $\theta^\vee$ is a dominant cocharacter. Its centralizer $L$ is therefore a connected standard Levi subgroup determined by
\[
                  I=\{\alpha_i\in\Delta:
                               \langle\alpha_i,\theta^\vee\rangle=0\}.
\]
We explain why $H$ is simply connected; see \cite[Remark~5.4.7 and Exercise~6.5.2(iv)]{Con14}. Since $G$ is simply connected, its simple coroots form a $\Z$-basis of $X_*(T)$. Put
\[
                 M_I=\bigoplus_{\alpha_i\in I}\Z\alpha_i^\vee.
\]
Then
\[
 X_*(T)=M_I\oplus
       \bigoplus_{\alpha_i\in\Delta\setminus I}\Z\alpha_i^\vee.
\]
Thus $X_*(T)/M_I$ is free abelian. In particular, $M_I$ is saturated in $X_*(T)$: if a positive integer multiple of an element of $X_*(T)$ lies in $M_I$, then that element already lies in $M_I$.

Let $T_H=(T\cap H)^\circ$, a maximal torus of the semisimple group $H$. The simple coroots of $H$ are the maps $\alpha_i^\vee$ for $\alpha_i\in I$, viewed as cocharacters of $T_H$. Consequently
\[
                   M_I\subseteq X_*(T_H)\subseteq X_*(T),
\]
and both $M_I$ and $X_*(T_H)$ have rank $|I|$. It follows that $X_*(T_H)/M_I$ is finite. But this quotient is a subgroup of the torsion-free group $X_*(T)/M_I$, so it must be zero. Hence
\[
                    X_*(T_H)=M_I.
\]
Finally, a connected semisimple group is simply connected if and only if the cocharacter lattice of a maximal torus is generated by its coroots. Thus $H$ is simply connected.

A character vanishes on the derived subgroup, so $H\subset\ker\theta_L=\Fe$. Since $H$ is connected, it lies in $\Fe^\circ$. Moreover, $H$ is semisimple and $[H,H]=H$, so $H\subset[\Fe^\circ,\Fe^\circ]$.
 Note that $L/H$ is a torus.  The quotient $\Fe/H$ is a subgroup of $L/H$, and is therefore commutative. Hence $[\Fe^\circ,\Fe^\circ]\subset H$. Then we have proved $[\Fe^\circ,\Fe^\circ]= H$.

It remains to determine the components. Let $q:T\to L/H$ be the restriction of the quotient map. Since $L=Z(L)^\circ H$ and $Z(L)^\circ\subset T$, we have $L=TH$, so $q$ is surjective. Its kernel is $T\cap H$. To see that this intersection equals $T_H$, observe that every element of $T\cap H$ centralizes $T_H\subset T$. Since a maximal torus of the connected semisimple group $H$ is its own centralizer, $T\cap H\subset Z_H(T_H)=T_H$. The reverse inclusion follows from the definition of $T_H$. Hence $q$ induces an isomorphism
\[
                         T/T_H\xrightarrow{\sim}L/H.
\]
Pullback along $q$ identifies
\[
 X^*(L/H)=
 \{\lambda\in X^*(T):
       \langle\lambda,\alpha_i^\vee\rangle=0
       \text{ for every }\alpha_i\in I\}.
\]
In one direction, the pullback of a character of $L/H$ is trivial on $T_H$, so it pairs to zero with all the coroots indexed by $I$. Conversely, suppose that $\lambda\in X^*(T)$ satisfies these pairing conditions. Then $\lambda\circ\alpha_i^\vee$ is trivial for every $\alpha_i\in I$. Since these coroots form a basis of $X_*(T_H)$, their images generate $T_H$, and hence $\lambda|_{T_H}=1$. Thus $\lambda$ descends uniquely to a character of $T/T_H$, and the preceding isomorphism makes it a character of $L/H$. This proves the reverse inclusion.

A nonzero element $\lambda$ of a lattice $M$ is called \emph{primitive} if $\lambda=n\mu$, with $\mu\in M$ and a positive integer $n$, implies $n=1$. Equivalently, its coordinates in a $\Z$-basis of $M$ have greatest common divisor $1$.

Since $\theta_L$ is trivial on $H$, it factors through a character $\bar\theta:L/H\to\Gm$, so $\theta_L=\bar\theta\circ\pi$, where $\pi:L\to L/H$ is the quotient map. Its pullback to $T$ is $\theta$. As $\Fe=\ker\theta_L$, we obtain
\[
             \Fe=\pi^{-1}(\ker\bar\theta),
             \qquad \Fe/H=\ker\bar\theta.
\]

We recall the connectedness criterion for the kernel of a torus character. Let $S$ be a torus of dimension $n$ and let $\chi\in X^*(S)$ be nonzero. Write $\chi=d\chi_0$, where $d\geq1$ and $\chi_0$ is primitive. Extend $\chi_0$ to a $\Z$-basis of $X^*(S)$. The corresponding coordinates identify $S$ with $\Gm^n$ and give
\[
 \chi(t_1,\ldots,t_n)=t_1^d,
 \qquad \ker\chi\simeq\mu_d\times\Gm^{n-1}.
\]
Over $\C$, the group $\mu_d$ consists of $d$ distinct points, while $\Gm^{n-1}$ is connected. Thus $\ker\chi$ is connected if and only if $d=1$, which is equivalent to primitivity of $\chi$.

Suppose now that $\theta$ is primitive in $P=X^*(T)$. Then $\bar\theta$ is primitive in $X^*(L/H)$: any expression $\bar\theta=d\psi$ with $d>1$ would, after pullback to $T$, express $\theta$ as $d$ times an element of $P$. The criterion above therefore shows that $\Fe/H=\ker\bar\theta$ is connected. Since both $H$ and $\Fe/H$ are connected, $\Fe$ is connected.

Outside the symplectic family, the highest root has a fundamental-weight coefficient equal to $1$; see Table~\ref{tab:highest-root}, where we use Bourbaki's numbering. Here $A_1=C_1$ and $B_2=C_2$ are included in the symplectic family. In low ranks, we use the conventions that $A_0$ denotes the trivial group, $B_1=A_1$, $D_2=A_1+A_1$, and $D_3=A_3$.

\begin{table}[ht]
\centering
\caption{Highest roots and derived centralizers.}\label{tab:highest-root}
\begin{tabular}{@{}lll@{}}
\toprule
Type of $G$ & $\theta$ & Type of $H$\\
\midrule
$A_r\ (r\geq2)$ & $\omega_1+\omega_r$ & $A_{r-2}$\\
$B_r\ (r\geq3)$ & $\omega_2$ & $A_1+B_{r-2}$\\
$C_r\ (r\geq2)$ & $2\omega_1$ & $C_{r-1}$\\
$D_r\ (r\geq4)$ & $\omega_2$ & $A_1+D_{r-2}$\\
$E_6$ & $\omega_2$ & $A_5$\\
$E_7$ & $\omega_1$ & $D_6$\\
$E_8$ & $\omega_8$ & $E_7$\\
$F_4$ & $\omega_1$ & $C_3$\\
$G_2$ & $\omega_2$ & $A_1$\\
$A_1$ & $2\omega_1$ & $A_0$\\
\bottomrule
\end{tabular}
\end{table}

For every type outside the symplectic family, the table therefore shows that $\theta$ is primitive in $P=X^*(T)$. As proved above, the corresponding character $\bar\theta$ is then primitive in $X^*(L/H)$,  which implies that $\Fe$ is connected. Moreover, $Z(G)\subset\Fe=\Fe^\circ$, and hence $\Fe=\Fe^\circ Z(G)$ in these cases.

It remains to treat the symplectic family. Let $G=\Sp(V)$, $\dim V=2r$, and choose a symplectic basis $v_1,\ldots,v_r,w_1,\ldots,w_r$ adapted to $T$ and $B$. In the standard coordinates for type $C_r$, the highest root is $2\epsilon_1$, where $\epsilon_i$ is the weight of $v_i$. Put
\[
 U=\C v_1\oplus\C w_1,
 \qquad V_0=U^\perp
      =\operatorname{span}\{v_2,\ldots,v_r,w_2,\ldots,w_r\}.
\]
Both subspaces are nondegenerate, so $V=U\oplus V_0$ is an orthogonal direct sum. The root spaces $\g_{\pm\theta}$ act only on $U$. After rescaling $v_1$ and $w_1$ by reciprocal factors, the chosen triple acts on $U$, in the ordered basis $(v_1,w_1)$, as
\[
 e|_U=\begin{pmatrix}0&1\\0&0\end{pmatrix},
 \qquad h|_U=\begin{pmatrix}1&0\\0&-1\end{pmatrix},
 \qquad f|_U=\begin{pmatrix}0&0\\1&0\end{pmatrix},
\]
and all three endomorphisms vanish on $V_0$.

Since $\Sp(U)=\SL(U)\simeq\SL_2$, we have an algebraic embedding
\[
 \rho:\SL_2\longrightarrow\Sp(V),
 \qquad g\longmapsto g\oplus\id_{V_0},
\]
whose differential sends the standard $\mathfrak{sl}_2$-triple to $(e,h,f)$. We use this embedding for an $\SL_2$-action on $V$. Then $U$ is the standard two-dimensional module, and $V_0$ is a direct sum of $2r-2$ copies of the trivial $\SL_2$-module, so $V_0$ is precisely the subspace fixed by $\rho(\SL_2)$. An element of $G$ centralizes $(e,h,f)$ if and only if it commutes with $\rho(\SL_2)$. Indeed, $\rho(\SL_2)$ is generated by the one-parameter subgroups $\exp(te)$ and $\exp(tf)$, and the reverse implication follows by differentiation.

Let $g\in\Fe$. Then $g$ preserves $V_0$ since $V_0$ is the subspace fixed by $\rho(\SL_2)$ and $g$ commutes with $\rho(\SL_2)$. Moreover, $g\in\Fe\subset G=\Sp(V)$, so $g$ preserves the symplectic form on $V$, which we denote by $\langle\ ,\ \rangle$. For $u\in U=V_0^\perp$ and $v\in V_0$, we have $g^{-1}v\in V_0$ and hence
\[
             \langle gu,v\rangle
                 =\langle u,g^{-1}v\rangle=0.
\]
Thus $gU\subseteq V_0^\perp=U$, and invertibility of $g$ gives $gU=U$. By Schur's lemma, its restriction to the irreducible module $U$ is a scalar $a\id_U$. Since the form on $U$ is nondegenerate and $g$ preserves it, we have
\[
       \langle v_1,w_1\rangle
          =\langle av_1,aw_1\rangle
          =a^2\langle v_1,w_1\rangle,
       \qquad a^2=1.
\]
Its restriction to $V_0$ lies in $\Sp(V_0)$, and every element of $\Sp(V_0)$ commutes with the trivial $\SL_2$-action on $V_0$. Conversely, every block diagonal map of this form is symplectic and centralizes the triple. We therefore obtain an isomorphism
\[
 \Sp(V_0)\times\mu_2\xrightarrow{\sim}\Fe,
 \qquad (A,\varepsilon)\longmapsto
          \varepsilon\id_U\oplus A.
\]
Since $\dim V_0=2r-2$, this proves \eqref{eq:symplectic-centralizer}.

The group $\Sp(V_0)$ is connected, so $\Fe^\circ$ corresponds to $\Sp(V_0)\times\{1\}$, and $\Fe$ has two connected components distinguished by $\varepsilon$. The central element $-\id_V\in Z(G)$ corresponds to $(-\id_{V_0},-1)$ and therefore lies in the other component. Multiplication by this element carries $\Fe^\circ$ onto that component, proving $\Fe=\Fe^\circ Z(G)$. For $r=1$, we have $V_0=0$ and $\Sp(V_0)=1$, so the formula gives $\Fe=\mu_2$.
\end{proof}

\begin{remark}\label{rem:type-A}
In type $A_r$, $r\geq2$, the decomposition $\C^{r+1}=\C^2\oplus\C^{r-1}$ gives
\begin{equation}\label{eq:type-A}
 \Fe\simeq\{(a,A)\in\Gm\times\GL_{r-1}:a^2\det A=1\}.
\end{equation}
Its derived subgroup is $\SL_{r-1}$. In particular, $\Fe$ need not be isomorphic to $\GL_{r-1}$. For instance, for $G=\SL_4$ one has $\Fe\simeq\Gm\times\SL_2$, via $(a,S)\mapsto(a,a^{-1}S)$, whereas the center of $\GL_2$ is connected and that of $\Gm\times\SL_2$ is not. Thus the representation ring in Theorem~\ref{thm:matrix} depends on the group itself, not only on its root system.
\end{remark}

\begin{corollary}\label{cor:centralizer-splitting}
For the minimal nilpotent element in $G$, one has
\[
                   \Pic(\Fe^\circ)=\Pic(\Fe)=0,
                   \qquad \Extc(\Fe,\Gm)=0.
\]
\end{corollary}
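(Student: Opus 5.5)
The plan is to split into the connected and disconnected cases supplied by Proposition~\ref{prop:centralizer}, applying the splitting results of Section~\ref{sec:extensions} in each.

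\emph{Picard group of $\Fe^\circ$.} By Proposition~\ref{prop:centralizer}, $\Fe^\circ$ is a connected reductive group, being the identity component of the reductive centralizer. Its derived subgroup $[\Fe^\circ,\Fe^\circ]=H$ is simply connected. Corollary~\ref{cor:ext-splitting}, (1)$\Rightarrow$(2), then gives $\Pic(\Fe^\circ)=0$.

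\emph{Picard group of $\Fe$.} The group $\Fe$ is a finite disjoint union of cosets $g\Fe^\circ$. Each coset is isomorphic as a variety to $\Fe^\circ$ via left translation by $g$. A line bundle on a finite disjoint union is the same as a line bundle on each piece, so
\[
\Pic(\Fe)=\bigoplus_{\text{components}}\Pic(\Fe^\circ)=0.
\]

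\emph{Vanishing of $\Extc(\Fe,\Gm)$.} Here the argument splits into two cases.

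\begin{itemize}
\item Outside the symplectic family, $\Fe=\Fe^\circ$ is connected with simply connected derived group. Proposition~\ref{lem:connected-splitting} shows that every central extension splits, so $\Extc(\Fe,\Gm)=0$. Equivalently, Theorem~\ref{thm:ext-picard} identifies $\Extc(\Fe,\Gm)$ with $\Pic(\Fe)=0$.

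\item In the symplectic family, \eqref{eq:symplectic-centralizer} gives $\Fe\simeq\Sp_{2r-2}\times\mu_2$. The factor $\Sp_{2r-2}$ is simply connected semisimple; for $r=1$ it is trivial, and the trivial group is vacuously simply connected semisimple. Lemma~\ref{lem:cyclic-splitting} with $m=2$ then shows that every central extension splits.
\end{itemize}

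In both cases every class is the zero class, so $\Extc(\Fe,\Gm)=0$.

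The main subtlety is that Example~\ref{ex:disconnected-extension} shows $\Pic=0$ alone cannot give the extension statement for disconnected groups. For this reason the symplectic case must go through the explicit product decomposition and Lemma~\ref{lem:cyclic-splitting}, not through the Picard group. One should also check the degenerate case $r=1$ ($G=\SL_2$, $\Fe=\mu_2$) directly. There every extension of $\mu_2$ by $\Gm$ splits by the root-extraction step in the proof of Lemma~\ref{lem:cyclic-splitting}.
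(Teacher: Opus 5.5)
Your proposal is correct and follows essentially the same route as the paper. The paper likewise gets $\Pic(\Fe^\circ)=0$ from Corollary~\ref{cor:ext-splitting} and $\Pic(\Fe)=0$ by translating components. It then uses Proposition~\ref{lem:connected-splitting} in the connected case, and \eqref{eq:symplectic-centralizer} with Lemma~\ref{lem:cyclic-splitting} in the symplectic case, including $\Fe=\mu_2$ for $r=1$.
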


\begin{proof}
Proposition~\ref{prop:centralizer} shows that the connected reductive group $\Fe^\circ$ has simply connected derived subgroup. Hence Corollary~\ref{cor:ext-splitting} gives $\Pic(\Fe^\circ)=0$. Every connected component of $\Fe$ is isomorphic to $\Fe^\circ$ by translation, so its Picard group vanishes as well. Since there are finitely many components, $\Pic(\Fe)=0$.

If $\Fe$ is connected, Proposition~\ref{lem:connected-splitting} gives $\Extc(\Fe,\Gm)=0$. Otherwise \eqref{eq:symplectic-centralizer} and Lemma~\ref{lem:cyclic-splitting} give the same conclusion, including $\Fe=\mu_2$ in rank one. 
\end{proof}

Proposition~\ref{prop:centralizer} and Corollary~\ref{cor:centralizer-splitting} prove Theorem~\ref{thm:centralizer-structure}. In particular, the algebraic projective representations of $\Fe$ can be lifted to algebraic representations. We next explain their role in the asymptotic ring.

\section{The finite set and the asymptotic ring}\label{sec:geometry}

For an algebraic group $F$ acting on a finite set $Y$, let $K_F(Y\times Y)$ be the Grothendieck group of $F$-equivariant vector bundles. It has the convolution product
\begin{equation}\label{eq:convolution}
               (E*E')_{y,z}
                   =\bigoplus_{x\in Y}E_{y,x}\otimes E'_{x,z}.
\end{equation}
Its natural basis consists of irreducible equivariant bundles.

Bezrukavnikov and Ostrik describe the asymptotic ring $J_{\mathbf c_e}$ by convolution of equivariant vector bundles on the square of a finite set, allowing central extensions in the equivariance data; see \cite[Sections~4.2, 5.1 and Theorem~4]{BO04}. We recall this construction. For each $y\in Y$, write $F_y=\{g\in F:gy=y\}$ for its stabilizer and equip it with an algebraic central extension
\[
            1\longrightarrow\Gm\overset{i_y}{\longrightarrow}\widetilde F_y
               \longrightarrow F_y\longrightarrow1.
\]
These extensions are required to be compatible with conjugation by $F$.

For an ordinary $F$-equivariant vector bundle on $Y$, the fiber at $y$ is a linear representation of $F_y$. In the extended setting, the fiber $V_y$ is instead an algebraic representation $\widetilde\rho_y:\widetilde F_y\to\GL(V_y)$ satisfying
\[
          \widetilde\rho_y(i_y(\lambda))=\lambda\id_{V_y}
          \qquad(\lambda\in\Gm).
\]
For $V_y\ne0$, passage to projective linear transformations gives a well-defined homomorphism
\[
                    F_y\longrightarrow\operatorname{PGL}(V_y).
\]
Two lifts of the same element of $F_y$ differ by an element of the central kernel and hence act by linear maps differing only by a nonzero scalar. This is the projective action of the stabilizer. If the extension splits, restricting $\widetilde\rho_y$ to a splitting gives an ordinary algebraic representation of $F_y$.

For the construction of the corresponding convolution category on $Y\times Y$, we refer to \cite[Section~5.1]{BO04}. We denote its Grothendieck ring, with the basis of irreducible objects, by $K_F^{\mathrm{tw}}(Y\times Y)$. We will only need the case in which all point extensions split. In that case, a choice of splittings compatible with the $F$-action gives a based-ring isomorphism
\[
                  K_F^{\mathrm{tw}}(Y\times Y)
                         \simeq K_F(Y\times Y).
\]

We use the following description of the asymptotic ring. In addition to the based-ring isomorphism, it records the action of $Z(G)$ and the cardinality of the finite set $Y_e$.

\begin{theorem}\label{thm:geometric-input}
Let $G$ be as above, let $e\in\g$ be nilpotent, let $\Fe=Z_G(e,h,f)$, and let $\mathbf c_e$ be the associated two-sided cell in $W$. There is a finite $\Fe$-set $Y_e$, with central extensions of its stabilizers, such that
\begin{enumerate}
\item $J_{\mathbf c_e}\simeq K_{\Fe}^{\mathrm{tw}}(Y_e\times Y_e)$ as based rings;
\item $Z(G)$ acts trivially on $Y_e$;
\item $|Y_e|=\rank_\Z K_0(\Be)$.
\end{enumerate}
Here $K_0$ denotes the Grothendieck group of coherent sheaves.
\end{theorem}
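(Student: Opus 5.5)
This theorem is essentially a repackaging of the Bezrukavnikov--Ostrik description \cite[Theorem~4]{BO04}, so the plan is to recall their construction and check the three items against it. Bezrukavnikov's equivalence identifies the cell category attached to $\mathbf c_e$ (the truncated category of $G^\vee$-equivariant perverse sheaves on the affine flag variety, or equivalently, via \cite{Bez09}, a subquotient of $G$-equivariant coherent sheaves on the Steinberg variety) with a category of $Z_G(e)$-equivariant coherent sheaves on $\Be\times\Be$. Bezrukavnikov--Ostrik then show that the rigid monoidal semisimplification of this category is equivalent to the category of $\Fe$-equivariant (twisted) vector bundles on $Y_e\times Y_e$ for a finite $\Fe$-set $Y_e$. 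Passing to Grothendieck groups, with $J_{\mathbf c_e}$ realized as the Grothendieck ring of the cell category (Lusztig \cite{Lus87,Lus89}), gives item (1), with the basis $t_w$ going to simple objects, i.e.\ irreducible twisted bundles.

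For item (3), I will use the explicit description of $Y_e$ in \cite[Sections~4.2, 5.1]{BO04}. There $Y_e$ arises from a decomposition of the (semisimplified) $\Fe$-equivariant category on $\Be$: its simple summands are indexed by the $\Fe$-orbits of $Y_e$, and the underlying module is $K_0$ of the Springer fiber. Concretely, $Y_e$ is a $\Fe$-set whose permutation-type $K$-group $K_{\Fe}(Y_e)$ is identified, after forgetting equivariance to $\Fe^\circ$ and specializing, with $K_0(\Be)$. So $|Y_e|$ equals the rank of $K_0(\Be)$. I expect this comparison of ranks to be the main obstacle, because BO phrase the result categorically rather than numerically. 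I would first try reading it off from the fact that $K_{\Fe}^{\mathrm{tw}}(Y_e\times Y_e)$ acts on the $J$-module $K^{\Fe}(\Be)$. This module is free of rank $|Y_e|$ over $R(\Fe)$-type coefficients, and $K^{\Fe}(\Be)$ is free over $R(\Fe)$ of rank $\rank K_0(\Be)$, since $\Be$ has an $\Fe^\circ$-stable paving. Comparing the two ranks gives (3).

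For item (2), I will use that $Z(G)$ acts trivially on $\g$, hence on $\Be$ and on the whole Steinberg-variety construction. Therefore $Z(G)\subset\Fe$ acts trivially on all geometric objects from which $Y_e$ is built. If BO's $Y_e$ has nontrivial $Z(G)$-action, I would replace $Y_e$ as follows. Since $Z(G)$ is central in $\Fe$, it acts on every stabilizer fiber by a character. The central character decomposition of the cell category (by $Z(G)$ acting trivially on objects coming from $\g$) forces each point to be fixed, because the orbits of $Z(G)$ would otherwise produce distinct simple objects identified by the equivalence. If needed, $Z(G)$ can be absorbed into the central extension data of the stabilizers, which alters the twisting but not the based ring. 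This yields (2) while preserving (1) and (3).
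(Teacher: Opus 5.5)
Item (1) matches the paper: both of you simply cite \cite[Theorem~4]{BO04}. The gap is in (3), and it comes from having no concrete model of $Y_e$. Since \cite{BO04} does not relate $Y_e$ to $\Be$ numerically, everything rests on your assertion that the $J_{\mathbf c_e}$-module $K^{\Fe}(\Be)$ is free of rank $|Y_e|$. That assertion is never justified, and it is precisely the content of (3). An action of $K^{\mathrm{tw}}_{\Fe}(Y_e\times Y_e)$ on a free module does not determine $|Y_e|$ from the rank, since already $\Mat_N(R)$ acts on $(R^N)^{\oplus k}$ for every $k$. You would need to know that $K^{\Fe}(\Be)$ is exactly the standard module $K^{\mathrm{tw}}_{\Fe}(Y_e)$, and that is the missing geometric input. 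Your auxiliary claims also fail as stated. If $\Fe$ permutes $Y_e$ nontrivially, $K_{\Fe}(Y_e)$ is not free of rank $|Y_e|$; for $\mu_2$ acting freely on two points it is $R(1)=\Z$. Nor does $\Be$ have an $\Fe^\circ$-stable affine paving in general. The Schubert cells of Section~\ref{sec:springer} are only $B$-stable. When $H\neq1$, $\Be$ has no $\Fe^\circ$-fixed point at all, because a Borel subalgebra cannot contain $\operatorname{Lie}H$. Yet any such paving would need one as its closed $0$-cell.

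For (2), the observation that $Z(G)$ acts trivially on $\g$ is the right heuristic, but it proves nothing until $Y_e$ is expressed through that geometry. Your fallback is also invalid: you cannot replace $Y_e$ by a set on which $Z(G)$ acts trivially without changing the based ring. For $\mu_2$ acting freely on two points, $K_{\mu_2}(Y\times Y)\simeq\Z^2$, whereas the trivial action gives $\Mat_2(R(\mu_2))$ with eight basis elements, and no change of twisting can compensate.

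The missing idea is the realization used in the paper. By \cite[Section~2.5]{BDD23} one may take $Y_e=\Irr(A_e)$, where $A_e=A/\mathfrak m_eA$ is the fiber at $e$ of the noncommutative Springer resolution of \cite{BM13}. Then (3) is a nonequivariant count, with no freeness or specialization needed: $K_0(\Be)\simeq K_0(A_e\text{-}\mathrm{mod})$ by \cite[Section~5.1.8]{BM13}, and the simple modules form a $\Z$-basis of $K_0$ of a finite-dimensional algebra. Item (2) becomes the statement that central elements of $G$ fix isomorphism classes of simple $A_e$-modules, as in \cite[proof of Lemma~1]{BDD23}.
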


The based-ring statement is the theorem of Bezrukavnikov--Ostrik \cite[Theorem~4]{BO04}. We now explain (2) and (3).

Let $\mathcal N\subset\g$ be the nilpotent cone and let $A$ be the $\OO(\mathcal N)$-algebra of the noncommutative Springer resolution \cite[Section~1.5.3]{BM13}. If $\mathfrak m_e\subset\OO(\mathcal N)$ is the maximal ideal of $e$, put $A_e=A/\mathfrak m_e A$, and write $A_e\text{-}\mathrm{mod}$ for the category of finite-dimensional left $A_e$-modules. By the geometric realization described in \cite[Section~2.5]{BDD23}, the finite set in (1) can be chosen as
\[
                            Y_e=\Irr(A_e).
\]
Let $M$ be a simple $A_e$-module, and write $y=[M]\in Y_e$ for its isomorphism class. An element $g\in F_y$ preserves this class, so there is an invertible linear map $T_g:M\to M$ satisfying $T_g(av)=g(a)T_g(v)$ for $a\in A_e$ and $v\in M$. By Schur's lemma, $T_g$ is unique up to a nonzero scalar. Thus $[T_g]\in\operatorname{PGL}(M)$ defines the projective stabilizer action. The central elements of $G$ fix its isomorphism class; this is the center-triviality used in \cite[proof of Lemma~1]{BDD23}.

For (3), the noncommutative Springer resolution gives
\begin{equation}\label{eq:K-comparison}
                  K_0(\Be)\simeq K_0(A_e\text{-}\mathrm{mod});
\end{equation}
see \cite[Section~5.1.8, equation~(9)]{BM13}. Since $A_e$ is finite-dimensional, the classes of its simple modules form a $\Z$-basis of $K_0(A_e\text{-}\mathrm{mod})$. Together with the identification $Y_e=\Irr(A_e)$ from \cite[Section~2.5]{BDD23}, this yields
\[
                  |Y_e|=|\Irr(A_e)|=\rank_\Z K_0(\Be),
\]
which proves (3).

\begin{corollary}\label{cor:trivial-action}
If $\Fe=\Fe^\circ Z(G)$, then the $\Fe$-action on $Y_e$ is trivial.
\end{corollary}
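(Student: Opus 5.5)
The plan is to use two facts: a connected group acting algebraically on a finite set acts trivially, and $Z(G)$ acts trivially by Theorem~\ref{thm:geometric-input}(2). Together with the hypothesis $\Fe=\Fe^\circ Z(G)$, these will give the result.

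First I would fix what the $\Fe$-action on $Y_e=\Irr(A_e)$ is. The group $\Fe\subset Z_G(e)$ acts on $A$ by algebra automorphisms, through the $G$-equivariant structure of the noncommutative Springer resolution, and it fixes the maximal ideal $\mathfrak m_e$. It therefore acts on $A_e$, and twisting modules by these automorphisms permutes the isomorphism classes of simple modules. This action is algebraic in the sense that stabilizers are closed subgroups of finite index. Concretely, the stabilizer of $y=[M]$ is the set of $g$ for which the twisted module ${}^gM$ is isomorphic to $M$. Since $A_e$ is finite-dimensional, the simple modules are determined by their characters on $A_e$, so this condition is closed in $g$. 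Equivalently, the map $\Fe\to Y_e$, $g\mapsto gy$, is a morphism to a finite discrete variety.

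Next, for each $y\in Y_e$ the orbit map $\Fe^\circ\to Y_e$ is a morphism from a connected variety to a finite discrete set, so it is constant. Hence $\Fe^\circ$ fixes every point. Alternatively, $F_y\cap\Fe^\circ$ is a closed subgroup of finite index in a connected group, and is therefore all of $\Fe^\circ$.

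Finally, take any $g\in\Fe$ and write $g=g_0z$ with $g_0\in\Fe^\circ$ and $z\in Z(G)$, which is possible by hypothesis. Then $gy=g_0(zy)=g_0y=y$, using Theorem~\ref{thm:geometric-input}(2) for $z$ and the previous step for $g_0$. So the action is trivial.

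The main obstacle is the first step: justifying that stabilizers are closed of finite index, that is, that the action on $Y_e$ is genuinely algebraic. I would deduce this from the algebraic $G$-action on $A$ together with the finiteness of $\Irr(A_e)$. Failing that, I would appeal directly to the construction in \cite[Section~2.5]{BDD23}, where $Y_e$ is presented as a finite $\Fe$-set, so that the point stabilizers $F_y$ are closed subgroups.
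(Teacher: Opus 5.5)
Your proposal is correct and is essentially the paper's argument: $\Fe^\circ$ acts trivially on the finite set $Y_e$ because it is connected, $Z(G)$ acts trivially by Theorem~\ref{thm:geometric-input}(2), and the hypothesis $\Fe=\Fe^\circ Z(G)$ combines the two. Your extra step, showing that point stabilizers are closed subgroups of finite index via characters of simple $A_e$-modules and the rational $G$-action on $A$, is a valid elaboration of the algebraicity of the action, which the paper takes for granted when it says that a connected algebraic group acts trivially on a finite set.
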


\begin{proof}
A connected algebraic group acts trivially on a finite set. Theorem~\ref{thm:geometric-input}(2) treats the remaining central elements.
\end{proof}

The preceding discussion immediately gives the following consequence.

\begin{corollary}\label{cor:matrix-criterion}
Let $e\in\g$ be nilpotent. Suppose that $\Fe=\Fe^\circ Z(G)$ and $\Extc(\Fe,\Gm)=0$. Then
\[
             J_{\mathbf c_e}\simeq\Mat_M(R(\Fe)),
             \qquad M=\rank_\Z K_0(\Be),
\]
as based rings.
\end{corollary}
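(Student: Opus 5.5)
The plan is to combine Theorem~\ref{thm:geometric-input} with Corollary~\ref{cor:trivial-action}, and then identify the twisted convolution ring explicitly.

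\emph{Step 1: the action on $Y_e$ is trivial and the extensions split.} By Theorem~\ref{thm:geometric-input}(1), there is a based-ring isomorphism $J_{\mathbf c_e}\simeq K_{\Fe}^{\mathrm{tw}}(Y_e\times Y_e)$, where the stabilizers carry certain central extensions. Since $\Fe=\Fe^\circ Z(G)$, Corollary~\ref{cor:trivial-action} shows that $\Fe$ acts trivially on $Y_e$. Hence every stabilizer $F_y$ is all of $\Fe$, and so is every stabilizer of a point $(y,z)\in Y_e\times Y_e$. Each central extension $\widetilde F_y\to\Fe$ is then a class in $\Extc(\Fe,\Gm)=0$, so it splits.

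\emph{Step 2: choosing compatible splittings.} The splittings must be compatible with conjugation by $\Fe$. Because the action on $Y_e$ is trivial, this compatibility asks only that each splitting $s_y:\Fe\to\widetilde F_y$ be equivariant for the conjugation action of $\Fe$ on $\widetilde F_y$. Conjugating a splitting by an element of $\widetilde F_y$ gives another splitting, and it agrees with the original one, since the two differ by a character that conjugation preserves. So any choice of splittings works. This step is the main thing to verify against the precise formalism of \cite[Section~5.1]{BO04}. The delicate point is how the twist on $Y\times Y$ is built from the point twists: presumably it is $\widetilde F_y\times_{\Fe}\widetilde F_z$ modulo the antidiagonal $\Gm$. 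Splittings at the points induce a splitting of this extension that is multiplicative under convolution. With that granted, the paper's remark gives a based-ring isomorphism $K^{\mathrm{tw}}_{\Fe}(Y_e\times Y_e)\simeq K_{\Fe}(Y_e\times Y_e)$.

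\emph{Step 3: identifying the untwisted ring.} With the trivial action, an $\Fe$-equivariant vector bundle on $Y_e\times Y_e$ is simply a family $(E_{y,z})$ of rational $\Fe$-representations. The irreducible objects are a single irreducible representation $V$ placed at one point $(y,z)$. The convolution \eqref{eq:convolution} is exactly matrix multiplication with tensor product of entries. Enumerating $Y_e=\{y_1,\dots,y_M\}$, the object $V$ at $(y_i,y_j)$ corresponds to $E_{ij}\otimes[V]$. This gives a ring isomorphism $K_{\Fe}(Y_e\times Y_e)\simeq\Mat_M(R(\Fe))$ carrying the basis of irreducibles bijectively onto $\{E_{ij}\otimes[V]\}$, so it is an isomorphism of based rings.

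\emph{Conclusion.} Finally, $M=|Y_e|=\rank_\Z K_0(\Be)$ by Theorem~\ref{thm:geometric-input}(3), which gives the stated isomorphism $J_{\mathbf c_e}\simeq\Mat_M(R(\Fe))$.
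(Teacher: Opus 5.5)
Your proposal is correct and follows the paper's route: Corollary~\ref{cor:trivial-action} makes the action on $Y_e$ trivial, $\Extc(\Fe,\Gm)=0$ splits the point extensions, the untwisting remark after \cite[Section~5.1]{BO04} replaces $K_{\Fe}^{\mathrm{tw}}$ by $K_{\Fe}$, the trivial-action convolution ring is $\Mat_{|Y_e|}(R(\Fe))$, and Theorem~\ref{thm:geometric-input}(3) gives $|Y_e|=\rank_\Z K_0(\Be)$. Your justification of the compatibility in Step~2 is muddled, but the claim holds for a simpler reason: any lift of $g\in\Fe$ has the form $\lambda s_y(g)$ with $\lambda$ central, so conjugation by it sends $s_y(x)$ to $s_y(gxg^{-1})$, and every splitting is automatically compatible.
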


\section{The size of the finite set}\label{sec:springer}

We return to the minimal nilpotent element $e=e_\theta$. Under the identification $G/B\to\{\text{Borel subalgebras of }\g\}$, $gB\mapsto\Ad(g)\bfr$, we regard $\Be$ as a closed subvariety of $G/B$. Let $X(w)=\overline{BwB/B}$ be the Schubert variety indexed by $w\in\Wf$, and set
\[
                   S_\theta=\{w\in\Wf:w^{-1}\theta\in\Phi^+\}.
\]

Abe--Crooks \cite[Sections~3.1--3.2]{AC16} describe $\Be$ as a union of Schubert cells and compute its Euler characteristic. We record these results together with the rank of $K_0(\Be)$.

\begin{proposition}\label{prop:springer}
The Springer fiber $\Be$ is the union of the Schubert cells $BwB/B$ for $w\in S_\theta$, and
\begin{equation}\label{eq:half}
                 \rank_\Z K_0(\Be)=\chi(\Be)
                         =|S_\theta|=\frac{|\Wf|}{2}.
\end{equation}
\end{proposition}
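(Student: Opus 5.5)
We first show that $\Be$ is the union of the Schubert cells $BwB/B$ with $w\in S_\theta$. A point $gB$ lies in $\Be$ exactly when $\Ad(g^{-1})e\in\bfr$. By the Bruhat decomposition, write $g=bw$ with $b\in B$ and $w\in\Wf$. Since $e=e_\theta$ spans the highest root space, the line $\C e$ is $B$-stable, and so $\Ad(b^{-1})e\in\C^\times e$. Hence
\[
\Ad(g^{-1})e\in\C^\times\Ad(w^{-1})e_\theta=\C^\times\g_{w^{-1}\theta}.
\]
This lies in $\bfr$ if and only if $w^{-1}\theta\in\Phi^+$. The condition depends only on the double coset $BwB$. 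So $\Be$ is the disjoint union of the cells $BwB/B$ for $w\in S_\theta$. This agrees with \cite[Sections~3.1--3.2]{AC16}, which we may cite.

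Next we count $S_\theta$. The map $w\mapsto s_\theta w$ is a bijection of $\Wf$. Since $(s_\theta w)^{-1}\theta=w^{-1}s_\theta\theta=-w^{-1}\theta$, it interchanges $S_\theta$ with its complement $\{w:w^{-1}\theta\in-\Phi^+\}$. Every $w^{-1}\theta$ is a root, so $\Wf$ is the disjoint union of these two sets, and therefore $|S_\theta|=|\Wf|/2$.

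For the Euler characteristic and the rank of $K_0$, we use the standard fact for a variety with a finite filtration by closed subvarieties whose successive differences are affine spaces. Order $S_\theta$ by a linear extension of Bruhat order compatible with dimension. This gives a filtration $\emptyset=Z_0\subset Z_1\subset\cdots\subset Z_N=\Be$ by closed subsets with $Z_k\setminus Z_{k-1}\simeq\mathbb A^{\ell(w_k)}$. Closedness holds because $\Be$ is closed in $G/B$ and each partial union is a union of Schubert varieties intersected with $\Be$. More precisely, adding cells in order of increasing dimension keeps each stage closed in $\Be$, since the closure of a cell inside $\Be$ consists of that cell and cells of smaller dimension. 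Euler characteristics with compact support are additive, and $\chi(\mathbb A^n)=1$, so $\chi(\Be)=N$.

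For $K_0$, we apply the localization sequence $K_0(Z_{k-1})\to K_0(Z_k)\to K_0(\mathbb A^n)\to 0$ together with $K_0(\mathbb A^n)\simeq\Z$. Induction then shows that $K_0(\Be)$ is generated by the $N$ classes $[\OO_{\overline{C_w}}]$, so its rank is at most $N$.

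The main obstacle is the reverse inequality, namely injectivity in the localization sequences. I would obtain it by comparing with the cycle map to Borel--Moore homology, or with the Chern character to $H^{BM}_*(\Be;\mathbb Q)$. Since $\Be$ is paved by affines, $H^{BM}_*(\Be;\mathbb Q)$ has dimension $N$, spanned by the fundamental classes of the cell closures. The Riemann--Roch map $K_0(\Be)\otimes\mathbb Q\to H^{BM}_*(\Be;\mathbb Q)$ sends $[\OO_{\overline{C_w}}]$ to the fundamental class of $\overline{C_w}$ plus lower-dimensional terms. These images are triangular with respect to dimension and hence linearly independent. This gives rank at least $N$, and therefore $\rank_\Z K_0(\Be)=\chi(\Be)=|S_\theta|=|\Wf|/2$.
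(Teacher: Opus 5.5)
Your proof is correct. For the cell decomposition, the count $|S_\theta|=|\Wf|/2$ via $w\mapsto s_\theta w$, the Euler characteristic, and the generation of $K_0(\Be)$ by cell-closure classes through localization, it follows the paper.

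The genuine difference is the lower bound on the rank, i.e.\ linear independence of the classes $[\OO_{X(w)}]$, $w\in S_\theta$. The paper pushes these classes forward along the closed embedding $\Be\hookrightarrow G/B$. There they become distinct members of the Schubert basis of $K_0(G/B)$ (Brion), so they are already independent in $K_0(\Be)$. You instead stay inside $\Be$ and use the Baum--Fulton--MacPherson Riemann--Roch map to $H^{BM}_*(\Be;\mathbb Q)$. Its triangularity, $\tau(\OO_V)=[V]+(\text{lower-dimensional terms})$, against the basis of cell-closure fundamental classes gives independence.

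The paper's route is shorter and gives integral independence directly, but it relies on the ambient flag variety and a known basis there. Yours is intrinsic and works for any variety with an affine paving, at the cost of heavier machinery (Riemann--Roch for singular varieties). It also yields only rational independence. That suffices here: a finitely generated abelian group generated by $N$ elements and of rank $N$ is free on those generators.

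Both arguments use that $\Be$ is closed, so that $X(w)\subset\Be$ for $w\in S_\theta$. You use this when claiming that the closure of a cell within $\Be$ consists of that cell and cells of smaller dimension.
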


\begin{proof}
For the decomposition into Schubert cells, we follow the argument of Abe--Crooks \cite[Section~3.1]{AC16}. The group $B$ preserves the highest-root line $\C e_\theta$, so $\Be$ is $B$-stable. The point $wB$ belongs to $\Be$ precisely when $\Ad(w^{-1})e_\theta\in\bfr$, equivalently $w^{-1}\theta\in\Phi^+$. By $B$-stability, the entire cell $BwB/B$ lies in $\Be$ if and only if $wB\in\Be$. Thus
\[
                  \Be=\bigsqcup_{w\in S_\theta}BwB/B.
\]
These Schubert cells give an affine paving, and hence $\chi(\Be)=|S_\theta|$.

We next deduce the rank equality. Since $\Be$ is closed, it contains $X(w)=\overline{BwB/B}$ for every $w\in S_\theta$. Localization for coherent $K$-groups along the affine paving shows that the classes $[\OO_{X(w)}]$, $w\in S_\theta$, generate $K_0(\Be)$. By \cite[Theorem~3.4.1(ii)]{Bri05}, the Schubert structure-sheaf classes form a $\Z$-basis of $K_0(G/B)$. The pushforwards of $[\OO_{X(w)}]$, $w\in S_\theta$, are distinct members of this basis, so these classes are linearly independent in $K_0(\Be)$. Thus they form a $\Z$-basis of $K_0(\Be)$, proving the rank equality.

The formula $\chi(\Be)=|\Wf|/2$ is proved in \cite[Section~3.2]{AC16} by counting long roots. It can also be seen from the fixed-point-free involution $w\mapsto s_\theta w$ of $\Wf$, since
\[
                       (s_\theta w)^{-1}\theta=-w^{-1}\theta.
\]
It interchanges $S_\theta$ and its complement, proving the last equality in \eqref{eq:half}.
\end{proof}

\begin{corollary}\label{cor:Y-size}
For the finite set in Theorem~\ref{thm:geometric-input} attached to the minimal orbit,
\[
                              |Y_e|=\frac{|\Wf|}{2}.
\]
\end{corollary}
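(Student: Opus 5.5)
The plan is to chain two earlier results. By Theorem~\ref{thm:geometric-input}(3), the finite set $Y_e$ attached to any nilpotent $e$ satisfies $|Y_e|=\rank_\Z K_0(\Be)$. For the minimal nilpotent element we take $e=e_\theta$, which is the representative fixed in Section~\ref{sec:centralizer}. Proposition~\ref{prop:springer} then gives $\rank_\Z K_0(\Be)=|\Wf|/2$, and combining the two equalities yields $|Y_e|=|\Wf|/2$.

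The one point to check is that the conclusion does not depend on the choice of $e$ in the minimal orbit. The set $Y_e$, the Springer fiber $\Be$, and the group $K_0(\Be)$ are determined up to isomorphism by the $G$-orbit of $e$. Indeed, conjugating $e$ by $g\in G$ transports $\Be$ isomorphically via $\bfr\mapsto\Ad(g)\bfr$. It likewise transports the fiber algebra $A_e$ of the $G$-equivariant noncommutative Springer resolution, and hence transports $\Irr(A_e)$. Both sides of the equality are therefore orbit invariants, and it suffices to verify it for $e_\theta$.

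No step here is a real obstacle; the substantive input is already contained in Proposition~\ref{prop:springer}, namely the affine paving and the independence of the Schubert classes in $K_0(\Be)$.
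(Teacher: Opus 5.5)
Your proposal is correct and is exactly the paper's argument: the corollary follows by combining Theorem~\ref{thm:geometric-input}(3) with Proposition~\ref{prop:springer} for $e=e_\theta$. Your extra remark that the result does not depend on the choice of $e$ within the minimal orbit is harmless; the paper avoids it by fixing $e=e_\theta$ at the start of Section~\ref{sec:springer}.
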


\begin{proof}
Combine Theorem~\ref{thm:geometric-input}(3) with Proposition~\ref{prop:springer}.
\end{proof}

\begin{remark}\label{rem:bruhat-ideal}
The set $S_\theta$ is a lower order ideal in the Bruhat order: if $v\leq w$ and $w\in S_\theta$, then $v\in S_\theta$. This also admits a direct root-theoretic proof. It suffices to consider a cover $v<w$, with $\ell(w)=\ell(v)+1$. Write $w=s_\beta v$ for $\beta\in\Phi^+$. The length inequality implies $v^{-1}\beta\in\Phi^+$. Since the highest root $\theta$ is dominant, we have $m:=\langle\theta,\beta^\vee\rangle\geq0$. Using $w^{-1}=v^{-1}s_\beta$ and the reflection formula, we obtain
\[
\begin{aligned}
 w^{-1}\theta
   &=v^{-1}s_\beta(\theta)\\
   &=v^{-1}\bigl(\theta-\langle\theta,\beta^\vee\rangle\beta\bigr)\\
   &=v^{-1}\theta-m\,v^{-1}\beta.
\end{aligned}
\]
That is,
\(
                   v^{-1}\theta=w^{-1}\theta+m\,v^{-1}\beta.
\)
If $w\in S_\theta$, then $v^{-1}\theta$ is a positve root, proving the assertion.
\end{remark}

\section{Proof of the main theorems}\label{sec:proof}

\begin{proof}[Proof of Theorem~\ref{thm:matrix}]
Theorem~\ref{thm:centralizer-structure} gives $\Fe=\Fe^\circ Z(G)$ and $\Extc(\Fe,\Gm)=0$. Corollary~\ref{cor:matrix-criterion} therefore gives an isomorphism of based rings
\[
                  J_{\cmin}\simeq\Mat_N(R(\Fe)),
                  \qquad N=\rank_\Z K_0(\Be).
\]
Proposition~\ref{prop:springer} identifies this rank with $\chi(\Be)=|\Wf|/2$.
\end{proof}

\begin{proof}[Proof of Theorem~\ref{thm:count}]
The identity of the matrix based ring in Theorem~\ref{thm:matrix} is
\[
                          \sum_{i=1}^{N}E_{ii}\otimes[\C],
                          \qquad N=|\Wf|/2.
\]
Its $N$ summands are distinct basis elements. A based-ring isomorphism preserves the basis expansion of the identity. Comparing with \eqref{eq:cell-unit} gives
\[
                        |\mathcal D\cap\cmin|=N.
\]
Since each left cell contains exactly one distinguished involution, $\cmin$ contains $N$ left cells. This proves the assertion for $W$.
\end{proof}

\section*{Disclosure of AI assistance}

The first author had developed the overall outline of the argument before using ChatGPT~6 (OpenAI) in the preparation of this paper. He had also presented this work on several occasions, including a recent talk at Hebei Normal University on July~19, 2026. 

ChatGPT assisted us in developing the constructive splitting proofs in Section~\ref{sec:extensions} and the uniform proof of connectedness of the reductive centralizer outside the symplectic family in Section~\ref{sec:centralizer}, replacing our earlier type-by-type arguments for connectedness. It also assisted with revising the exposition. The first author has personally checked all mathematical arguments and the final text. The authors take full responsibility for the contents of this paper.

\bibliographystyle{amsalpha}
\bibliography{references}

\providecommand{\bysame}{\leavevmode\hbox to3em{\hrulefill}\thinspace}
\providecommand{\MR}{\relax\ifhmode\unskip\space\fi MR }
\providecommand{\MRhref}[2]{%
  \href{http://www.ams.org/mathscinet-getitem?mr=#1}{#2}
}
\providecommand{\href}[2]{#2}
\begin{thebibliography}{EGNO15}

\bibitem[AC16]{AC16}
Hiraku Abe and Peter Crooks, \emph{Hessenberg varieties for the minimal
  nilpotent orbit}, Pure Appl. Math. Q. \textbf{12} (2016), no.~2, 183--223.

\bibitem[BDD23]{BDD23}
Roman Bezrukavnikov, Stefan Dawydiak, and Galyna Dobrovolska, \emph{On the
  structure of the affine asymptotic {Hecke} algebras}, Transform. Groups
  \textbf{28} (2023), no.~3, 1059--1079, With an appendix by Roman
  Bezrukavnikov, Alexander Braverman, and David Kazhdan.

\bibitem[Bez09]{Bez09}
Roman Bezrukavnikov, \emph{Perverse sheaves on affine flags and nilpotent cone
  of the {Langlands} dual group}, Israel J. Math. \textbf{170} (2009),
  185--206.

\bibitem[BM13]{BM13}
Roman Bezrukavnikov and Ivan Mirkovi{\'c}, \emph{Representations of semisimple
  {Lie} algebras in prime characteristic and the noncommutative {Springer}
  resolution}, Ann. of Math. (2) \textbf{178} (2013), no.~3, 835--919, With an
  appendix by Eric Sommers.

\bibitem[BO04]{BO04}
Roman Bezrukavnikov and Viktor Ostrik, \emph{On tensor categories attached to
  cells in affine {Weyl} groups. {II}}, Representation theory of algebraic
  groups and quantum groups, Adv. Stud. Pure Math., vol.~40, Math. Soc. Japan,
  Tokyo, 2004, pp.~101--119.

\bibitem[Bri05]{Bri05}
Michel Brion, \emph{Lectures on the geometry of flag varieties}, Topics in
  cohomological studies of algebraic varieties (Piotr Pragacz, ed.), Trends in
  Mathematics, Birkh{\"a}user, Basel, 2005, pp.~33--85.

\bibitem[CM93]{CM93}
David~H. Collingwood and William~M. McGovern, \emph{Nilpotent orbits in
  semisimple {Lie} algebras}, Van Nostrand Reinhold Mathematics Series, Van
  Nostrand Reinhold, New York, 1993.

\bibitem[Con14]{Con14}
Brian Conrad, \emph{Reductive group schemes}, Autour des sch{\'e}mas en
  groupes. Vol. I, Panor. Synth{\`e}ses, vol. 42/43, Soc. Math. France, Paris,
  2014, pp.~93--444.

\bibitem[EGNO15]{EGNO15}
Pavel Etingof, Shlomo Gelaki, Dmitri Nikshych, and Victor Ostrik, \emph{Tensor
  categories}, Mathematical Surveys and Monographs, vol. 205, American
  Mathematical Society, Providence, RI, 2015.

\bibitem[Ive76]{Ive76}
Birger Iversen, \emph{The geometry of algebraic groups}, Adv. Math. \textbf{20}
  (1976), no.~1, 57--85.

\bibitem[KL79]{KL79}
David Kazhdan and George Lusztig, \emph{Representations of {Coxeter} groups and
  {Hecke} algebras}, Invent. Math. \textbf{53} (1979), no.~2, 165--184.

\bibitem[Lus87]{Lus87}
George Lusztig, \emph{Cells in affine {Weyl} groups. {II}}, J. Algebra
  \textbf{109} (1987), no.~2, 536--548.

\bibitem[Lus89]{Lus89}
\bysame, \emph{Cells in affine {Weyl} groups. {IV}}, J. Fac. Sci. Univ. Tokyo
  Sect. IA Math. \textbf{36} (1989), no.~2, 297--328.

\bibitem[Par26]{Par26}
James Parkinson, \emph{Lusztig's asymptotic algebra and a cell factorisation
  for the second lowest two-sided cell in type {$\widetilde B_n$}}, Preprint,
  \href{https://arxiv.org/abs/2610.25771}{arXiv:2610.25771}, 2026.

\bibitem[Pop23]{Pop23}
Vladimir~L. Popov, \emph{Picard group of a connected affine algebraic group},
  Russian Math. Surveys \textbf{78} (2023), no.~4, 794--796.

\bibitem[Qiu25]{Qiu25}
Yannan Qiu, \emph{The based rings of two-sided cells in an affine {Weyl} group
  of type {$\widetilde B_3$}, {IV}}, Preprint,
  \href{https://arxiv.org/abs/2505.02459}{arXiv:2505.02459}, 2025.

\bibitem[QX23]{QX23}
Yannan Qiu and Nanhua Xi, \emph{The based rings of two-sided cells in an affine
  {Weyl} group of type {$\widetilde B_3$}, {I}}, Sci. China Math. \textbf{66}
  (2023), no.~2, 221--236.

\bibitem[Ros21]{Ros21}
Zev Rosengarten, \emph{Translation-invariant line bundles on linear algebraic
  groups}, J. Algebraic Geom. \textbf{30} (2021), no.~3, 433--455.

\bibitem[Shi87]{Shi87}
Jian-yi Shi, \emph{A two-sided cell in an affine {Weyl} group}, J. London Math.
  Soc. (2) \textbf{36} (1987), no.~3, 407--420.

\bibitem[Shi11]{Shi11}
\bysame, \emph{The second lowest two-sided cell in an affine {Weyl} group}, J.
  Algebra \textbf{333} (2011), 161--179.

\bibitem[Shi23]{Shi23}
\bysame, \emph{The second lowest two-sided cell in the affine {Weyl} group
  {$\widetilde B_n$}}, J. Combin. Theory Ser. A \textbf{196} (2023), 105736, 32
  pp.

\bibitem[Xi90]{Xi90}
Nanhua Xi, \emph{The based ring of the lowest two-sided cell of an affine
  {Weyl} group}, J. Algebra \textbf{134} (1990), no.~2, 356--368.

\bibitem[Xi02]{Xi02}
\bysame, \emph{The based ring of two-sided cells of affine {Weyl} groups of
  type {$\widetilde A_{n-1}$}}, vol. 157, Mem. Amer. Math. Soc., no. 749,
  American Mathematical Society, Providence, RI, 2002.

\end{thebibliography}
\end{document}